\newtheorem{theorem}{Theorem}
\newtheorem{proposition}{Proposition}
\newtheorem{definition}{Definition}
\begin{document}
\author{
Yuchao Dong\\{\small Universit\'e d'Angers,}
	\\{\small D\'epartement de Math\'ematiques,}
	\\{\small 2, Bd Lavoisier,}\\
	{\small 49045 Angers Cedex 01, France,}\\ 
\and
Lioudmila Vostrikova\\
	{\small  Universit\'e d'Angers,}\\
	{\small  D\'epartement de Math\'ematiques,}\\
	{\small  2, Bd Lavoisier,}\\
        {\small 49045 Angers Cedex 01, France,}\\
}
\title{
Utility maximization for L\'evy switching models} 
\maketitle
\vspace{-1cm}
\begin{abstract} 
This article is devoted to the maximisation of HARA utilities of L\'evy switching process
on finite time interval via dual method. We give the description of all $f$-divergence minimal martingale measures in initially enlarged filtration, the expression of their Radon-Nikodym densities involving Hellinger and Kulback-Leibler processes, the expressions of the optimal strategies in progressively enlarged filtration for the maximisation of HARA utilities  as well as the values of the corresponding maximal expected utilities. The example of Brownian switching model is presented to give the financial interpretation of the results. 
\end{abstract}

\noindent {\sc Key words and phrases}: L\'{e}vy switching models, utility maximisation, dual approach, f-divergence minimal martingale measure, optimal strategy \\ \\
\noindent MSC 2010 subject classifications:  60G07, 60G51, 91B24 \\
\section{Introduction}
\par  Regime switching processes are the processes whose parameters depend on some Markov process, taking a finite number of values. Those processes have been applied successfully  since more than 20 years to model the behaviour of economic time series and financial time series. Hamilton and al.  in \cite{Ham} considered regime switching autoregressive conditional heteroskedastic models and proposed a way for their calibration. Cai in \cite{Cai}
considered regime switching ARCH models. So and al. in \cite{SLL} considered the so-called Markov switching volatility models to capture the changing of the behaviour of the volatility due to the economic factors.
\par L\'evy switching processes  was widely used for pricing due to their flexibility to reflect the real data and their comodity  as mathematical tool. For two state variance gamma model, Komkov and Madan in \cite{KM} have developed a method based on fast Fourier Transform to price the vanila options.
Later, using the implementation of numerical methods of resolving PDE equations, Chordakis \cite{Cho} has priced the exotic contracts, like barrier, Bermuda and American options.
Then, Elliott and Osakwa in  \cite{EO} used multi-state pure jump processes to price vanila derivatives. Later, Elliott and al. \cite{ESC} considered Heston Markov switching model to price variance swap and volatility swap with probabilistic and PDE approaches. Hainaut \cite{Hai} gave an overview of the tools related to L\'evy  switching models, like moments and the conditions under which such processes are martingales, the measure transform preserving L\'evy  switching structure, and also the questions of option pricing and calibration.
\par The question of the optimal hedging in discrete time methodology that minimizes the expected value of  given penalty function of the hedging error with general regime-switching framework was studied by So and al. \cite{SLL}. Francois and al. \cite{FGG} considered the question of the portfolio optimisation in affine models with Markov switching. Escobar and al. \cite{ENZ} derive the optimal investment strategies for the expected utility maximization of terminal wealth.
\par The utility maximization problem of the L\'evy switching models can be solved by the so-called  dual method. The dual method for utility maximization has been deeply studied by Goll and Ruschendorf in \cite{GR} in general semi-martingale setting. This method permits to find a semi-explicit  NA conditions as well as a semi-explicit expression of the optimal (asymptotically optimal) strategies for utility maximization. This method was applied for pricing and hedging of exponential L\'{e}vy models in Miyahara \cite{M}, Fujiwara and Myahara \cite{FM}, Chouli and al. \cite{CS}, \cite{CSL}, Essche and Schweizer\cite{ES}, Hubalek and Sgarra \cite{HS}, Jeanblanc and al.\cite{JKM}, Cawston and Vostrikova \cite{CV1}, \cite{CV2}. This method was also efficient to determine the indifference prices in Ellanskaya and Vostrikova \cite{EV}, Vostrikova \cite{V}. Then, in Cawston and Vostrikova \cite{CV3} the  problem of utility maximization was solved for change-point exponential L\'evy  models. The L\'evy switching models are natural generalisations of change-point exponential L\'evy  models, permitting multiple changes of the behaviour of L\'evy  processes on the observed interval of time. 
\par  This article is devoted to the utility maximisation problem  for L\'evy switching
models. More precisely, we consider $N$ independent L\'evy processes $X^{(j)}=(X_t^{(j)})_{0\leq t\leq T}, j=1,2,...,N,$ on the interval $[0,T]$, with values in $\mathbb R^d$ and starting from $0$, which are defined on a probability space $(\Omega_1,\mathcal G,P_1)$ with the natural filtration $\mathbb G=(\mathcal G_t)_{0\leq t\leq T}$ satisfying  usual conditions. Each L\'evy process $X^{(j)}$ has the   characteristic triplet denoted by $(b^{(j)},c^{(j)},\nu^{(j)})$ where $b^{(j)}$ is the drift parameter, $c^{(j)}$ is the quadratic variation of its continuous martingale part and $\nu^{(j)}$ is L\'evy measure  which verifies the usual condition
\[\int_{\mathbb R^d}(\|x\|^2\land1)\nu^{(j)}(dx)<\infty\]
where $\|\cdot\|$ is euclidean norm in $\mathbb{R}^d$.
 \par As known, the law of $X^{(j)}$ is entirely characterized by the characteristic function $\varphi_t^{(j)}$ of $X_t^{(j)}$ with $t>0$, which is, for $\lambda \in \mathbb R^d$, given by
\[\varphi_t^{(j)}(\lambda)=\exp\{t\psi^{(j)}(\lambda)\}\]
where $\psi^{(j)}$  is  the characteristic exponent of $X^{(j)}$. We recall that $\psi^{(j)}$ is  determined by the L\'evy-Khinchin formula, namely
\[\psi^{(j)}(\lambda)=i\langle\lambda,b^{(j)}\rangle-\frac{1}{2}\langle\lambda,c^{(j)}\lambda\rangle+\int_{\mathbb R^d}\left (e^{i\langle\lambda,x\rangle}-1-i 
\langle\lambda,x\rangle I_{\{\|x\|\le 1\}}\right )\nu^{(j)}(dx)\]
where $I(\cdot)$ denotes the indicator function.
\par Let also $\alpha=(\alpha_t)_{{0\leq t\leq T}}$ be a homogeneous Markov process with the values in the set $\{1,2,3,...,N\}$, given on the probability space $(\Omega_2,\mathcal H,P_2)$ with the natural filtration $\mathbb H=(\mathcal H_t)_{t \ge 0}$ such that $\mathcal H_0=\{\emptyset,\Omega\}$, which is independent from the L\'{e}vy processes $X^{(1)},X^{(2)},...,X^{(N)}$. 
\par Then on the product space   $(\Omega,\mathcal F,\mathbb P)=(\Omega_1\times\Omega_2,\mathcal G \times \mathcal H,P_1\times P_2)$
we can define two filtrations.  One is the progressively enlarged filtration $\mathbb F=(\mathcal F_t)_{0\leq t\leq T}$ where  for $t<T$
\[\mathcal F_t=\bigcap_{s>t}\mathcal G_s \otimes \mathcal H_s \,\,\,\mbox{and}\,\,\,\mathcal{F}_T=\mathcal G_T \otimes \mathcal H_T\]
and the second one is the initially enlarged filtration $\hat {\mathbb F}=(\hat {\mathcal F}_t)_{0\leq t\leq T}$ defined  for $t<T$ as
$$\hat {\mathcal F}_t=\bigcap_{s>t}\mathcal G_s \otimes \mathcal H_T \,\,\,\mbox{and}\,\,\,\hat{\mathcal{F}}_T=\mathcal G_T \otimes \mathcal H_T.$$
For technical reasons we will work  in the initially enlarged filtration, and surprisingly at the first glance, we will obtain the results in progressively enlarged filtration for minimal martingale measures and the optimal strategies for utility maximisation. Such phenomenon can be explained by the fact that firstly both filtrations coincide at the time $T$, and secondly, by  the fact that
 the L\'evy processes $X^{(j)}, j=1,\cdots N,$ remain independent from the Markov process $\alpha$ under any  minimal martingale measure.
\par On the  defined above product space, we  now introduce a L\'evy switching process $X$ such that the increments of this process coincide with the increments of $X^{(j)}$ when the process $\alpha$ states in the state $j$:  
\begin{equation}\label{definition switching}
dX_t=\sum_{j=1}^NdX_t^{(j)}I_{\{\alpha_{t-}=j\}}
\end{equation}
or
\[X_t=\sum_{j=1}^N\int_0^tI_{\{\alpha_{s-}=j\}}dX_s^{(j)}.\]
More explicitly, if, for example, at $t=0$, $\alpha_0=i_0$ and $\tau_1$ is the first time of change from the state $i_0$ to another state $i_1$
and $\tau_k=\inf\{t>\tau_{k-1}|\alpha_t \neq i_{k-1}\}$ for $k \ge 1$, then
\[X_t=\left \{{
\begin{split}
&X_t^{(i_0)},\text{ for } t\le \tau_1,\\
&X_{\tau_1}^{(i_0)}+X_t^{(i_1)}-X_{\tau_1}^{(i_1)},\text{ for } \tau_1 <t\le \tau_2,\\
&...\\
&X_{\tau_n}^{(i_{n-1})}+X_t^{(i_n)}-X_{\tau_{n}}^{(i_n)},\text{ for } \tau_n <t\le \tau_{n+1},\\
&....
\end{split}
}\right .\]
The characteristic function of $X$ can be find easily since $X$ is a process with, conditionally to $\alpha$, independent increment. Due to the mutual independence of L\'evy processes and $\alpha$, we have
\begin{equation*}
\begin{split}
E_{\mathbb P}\left[\exp(i\langle\lambda,X_t\rangle)\right]&=E_{\mathbb P}\left[E_{\mathbb P}\left[\exp(i\langle\lambda,X_t\rangle)|\alpha\right]\right]\\
&=E_{\mathbb P}[E_{\mathbb P}[\exp(i\langle\lambda,\sum_{j=1}^N\int_0^tI_{\{\alpha_{s-}=j\}}dX_s^{(j)}\rangle)|\alpha]]\\
&=E_{\mathbb P}[\prod_{j=1}^NE_{\mathbb P}[\exp(i
\langle\lambda,\int_0^tI_{\{\alpha_{s-}=j\}}dX_s^{(j)}\rangle)|\alpha]].\\
\end{split}
\end{equation*}
We remark that for any real-valued deterministic function $q=(q_s)_{s\ge0}$ such that $\int_0^tq_sdX_s^{(j)}$ exists,  the characteristic function verifies
\begin{equation*}
\begin{split}
&E_{\mathbb P}[\exp(i\langle\lambda,\int_0^tq_sdX_s^{(j)}\rangle)]
=\exp(i\langle\lambda,b^{(j)}\rangle\int_0^tq_sds-\frac{1}{2}\langle \lambda, c^{(j)}\,\lambda \rangle\int_0^tq_s^2ds+\\&\hspace{4cm}\int_0^t\int_{\mathbb R^d}\left (e^{i\langle\lambda q_s,x\rangle}-1-i\langle\lambda q_s,x\rangle I_{\{\|x\|\le 1\}}\right )\nu^{(j)}(ds,dx).
\end{split}
\end{equation*}
If $q_s=I_{\{\alpha_{s-}=j\}}$ for $s \ge 0$, then $q_s^2=q_s$ and if $I_{\{\alpha_{s-}=j\}}=1$ then 
\[\int_{\mathbb R^d}\left (e^{i\langle\lambda q_s,x\rangle}-1-i\langle\lambda q_s,x\rangle I_{\{\|x\|\le 1\}}\right )\nu^{(j)}(dx)=\int_{\mathbb R^d}\left (e^{i\langle\lambda ,x\rangle}-1-i\langle\lambda,x\rangle I_{\{\|x\|\le 1\}}\right )\nu^{(j)}(dx).\]
In addition, if $I_{\{\alpha_{s-}=j\}}=0$ then this integral is equal to $0$. Finally,
\[E_{\mathbb P}[\exp(i\langle\lambda,X_t\rangle)]=E_{\mathbb P}[\exp(\sum_{j=1}^N \,\psi^{(j)}(\lambda)\int_0^tI_{\{\alpha_{s-}=j\}}ds)].\]
\par We will consider the problem of the utility maximization for the process $S=(S_t)_{t\ge 0}$ such that the components of $S$ denoted by $S^{(k)},1\le k \le d$, are Dol\'eans-Dade exponentials of the corresponding components of $X$, denoted by $\bar{X}^{(k)}$, i.e. for all $t\geq 0$
\[S^{(k)}_t=S^{(k)}_0\exp\{\bar{X}_t^{(k)}-\frac{1}{2}\langle \bar{X}^{(k),c}\rangle_t\}\prod_{0<s\le t}e^{-\Delta \bar{X}_s^{(k)}}(1+\Delta \bar{X}^{(k)}_s)\]
where $\bar{X}^{(k),c}$ is continuous martingale part of $\bar{X}^{(k)}$ and $\langle \bar{X}^{(k),c}\rangle$ is its predictable quadratic variation.
\par As utility functions, we consider HARA utilities, which are logarithmic, power and exponential utilities, defined as
\begin{equation*}
\begin{split}
&u(x)=\ln(x) \text{ with } x >0,\\
&u(x)=\frac{x^p}{p}\text{ with } x>0 \text{ and } p \in (-\infty,0)\cup(0,1),\\
&u(x)=1-\exp(-x) \text{ with } x\in \mathbb R.\\
\end{split}
\end{equation*}

Let us denote by $\mathcal A$ a set of self-financing admissible strategies. We recall that an admissible strategy is a predictable process $\Phi=(\eta,\phi)$ taking values in $\mathbb R^{d+1}$ where $\eta$ represents the quantity invested in the non-risky asset $B$ and $\phi=(\phi^{(1)},\phi^{(2)},...,\phi^{(d)})$ are the quantities invested in the risky assets $S^{(1)},S^{(2)},...,S^{(d)}$ respectively such that $\eta$ is $B$-integrable and there exists  $ a \in \mathbb R^+$ such that for $t\in[0,T]$
\[\sum_{k=1}^d\int_0^t\phi_s^{(k)}dS_s^{(k)}\ge -a.\]
We say that a strategy $\hat \phi \in \mathcal A$ is $u$-optimal on $[0,T]$ if 
$$E[u(x_0+\sum_{k=1}^d\int_0^T\hat{\phi}_s^{(k)}dS_s^{(k)})]=\sup_{\phi \in \mathcal A}E[u(x_0+ \sum_{k=1}^d\int_0^T\phi_s^{(k)}dS_s^{(k)})]$$
where $x>0$ is initial capital.
A sequence of admissible strategies $(\hat \phi^n)_{n\ge1}$ is said to be asymptotically $u$-optimal on $[0,T]$ if 
\[\lim_{n \rightarrow \infty}E[u(x_0+\sum_{k=1}^d\int_0^T\hat{\phi}_s^{(k),n}dS_s^{(k)})]=\sup_{\phi \in \mathcal A}E[u(x_0+\sum_{k=1}^d\int_0^T\phi_s^{(k)}dS_s^{(k)})].\]
\par The article is organized in the following way. In Section 2 we give a short description of the known results on dual method for the utility maximisation of L\'evy processes (see Propositions 2 and 3). In Section 3 we summarize the useful information on Hellinger and Kulback-Leibler processes (see Proposition 4). In  Section 4 we give a description of all $f$-divergence minimal martingale measures for HARA utilities in progressively enlarged filtration (see Propositions 5,6 and Theorem 1). Then, in Propositions 7, 8, 9 of Section 5 we give the expressions for the optimal strategies in progressively enlarged filtration and we calculate the value of the maximal expected utility. In section 6 we  apply our results to Brownian switching model  and we give the financial interpretation of our results.
\section{Dual approach for utility maximisation of exponential L\'evy models}

\par The idea of this method is to replace the problem of utility maximization by the problem of minimization of the corresponding $f$-divergence over the set of all, equivalent to the law  $\mathbb{P}_T$ of the process $X$,  martingale measures $\mathbb{Q}_T$  where $T$ is time horizon.
The function $f$ is nothing else as the dual function of $u$, which can be obtained
by Fenchel-Legendre transform :
\[f(y)=\sup_{x\in\mathbb R}(u(x)-xy)\]
Simple calculations show that
\begin{equation*}
\begin{split}
&f(x)=-\ln(x)-1,x>0\text{ if $u$ is logarithmic},\\
&f(x)=-\frac{p-1}{p}x^{\frac{p}{p-1}},x>0\text{ if $u$ is power},\\
&f(x)=1-x+x\ln(x),x>0\text{ if $u$ is exponential}.\\ 
\end{split}
\end{equation*} 
We recall that for two equivalent measures $\mathbb{Q}_T$ and $\mathbb{P}_T$ the $f$-divergence of $\mathbb{Q}_T$ w.r.t. $\mathbb{P}_T$ is defined as
$$f(\mathbb{Q}_T|\mathbb{P}_T)= E_{\mathbb{P}}\left[f\left(\frac{d\mathbb{Q}_T}{d\mathbb{P}_T}\right)\right].$$
\par We mention now some useful definitions  and results on the $f$-divergence problem and optimal investment problem in exponential L\'evy models.
For that, we take a L\'evy process $L=(L_t)_{0\leq t\leq T}$   with the values in $\mathbb{R}^d$,  given on probability space $(\Omega,\mathcal{F},P)$ with the filtration $\mathbb{F}$ satisfying usual properties. We denote the characteristic triplet of the process $L$ by $(b_L,c_L,\nu_L)$
where $b_L$ is drift parameter, $c_L$ is the predictable variation of the continuous martingale part of $L$, and $\nu _L$ is L\'evy measure. We denote by $\mathbb{P}$ the law of $L$ and we suppose that $L$ is integrable, i.e. $E_{\mathbb{P}}(|L_t|)<\infty$ for 
$t\in[0,T]$.

\begin{definition}
We say that $\mathbb{Q}_T^{*}$ is an $f$-divergence minimal equivalent martingale measure for L\'evy process $L=(L_t)_{0\leq t\leq T}$  if
\begin{enumerate}
\item the measure $\mathbb{Q}_T^{*}$ is equivalent to $\mathbb{P}_T$, i.e. $\mathbb{Q}_T^{*}\sim \mathbb{P}_T$,
\item the process $L$ is a martingale w.r.t. $(\mathbb{F},\mathbb{Q}_T^{*})$,
\item $f(\mathbb{Q}_T^{*}|\mathbb{P}_T) < +\infty $ and
	$$f(\mathbb{Q}^{*}_T|\mathbb{P}_T)=\inf_{\mathbb{Q}_T\in \mathcal {M} }f(\mathbb{Q}_T|\mathbb{P}_T)$$
	where  $\mathcal{M}$ is the set of all equivalent martingale measures $\mathbb{Q}_T$.
	\end{enumerate}
\end{definition}
\begin{definition}
		We say that an $f$-divergence minimal martingale measure $\mathbb{Q}_T^*$ is invariant under scaling if for all $x \in \mathbb R^+$,
		\[f(x\mathbb{Q}_T^*|\mathbb{P}_T)=\inf_{\mathbb{Q}_T\in \mathcal{M}}f(x\mathbb{Q}_T|\mathbb{P}_T)\]
\end{definition}

\begin{definition}
		We say that an $f$-divergence minimal martingale measure $\mathbb{Q}_T^*$ is time horizon invariant if for all $t \in]0,T]$
		\[f(\mathbb{Q}_t^*|\mathbb{P}_t)=\inf_{\mathbb{Q}_t\in \mathcal{M}}f(\mathbb{Q}_t|\mathbb{P}_t)\]
\end{definition}

\begin{definition}
 We say that an $f$-divergence minimal martingale measure $\mathbb{Q}_T^*$ preserves the L\'evy property if $L$ remains a L\'evy process under $\mathbb{Q}_T^*$.
\end{definition}

\begin{proposition}(cf. \cite{CV1},\cite{CV2}) Let $L$ be an integrable L\'evy process, $f$ be one of the three dual functions above and  $\mathbb{Q}^{*}_T$  the corresponding $f$-divergence minimal martingale measure. Then this measure is  time horizon invariant, it is invariant under scaling and it preserves L\'evy property of the process~$L$. 
\end{proposition}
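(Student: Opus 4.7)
The plan is to combine Girsanov's theorem for Lévy processes with convexity of $f$ in order to reduce the minimization over $\mathcal{M}$ to a minimization over a finite-dimensional parameter set, and then to read off the three invariance properties from the exponential-Lévy structure of the resulting density.

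First I would parametrize any $\mathbb{Q}_T \in \mathcal{M}$ by predictable Girsanov data $(\beta,Y)$, writing
\[
\frac{d\mathbb{Q}_T}{d\mathbb{P}_T} = \mathcal{E}\bigl(\beta \cdot L^c + (Y-1)*(\mu^L - \nu_L)\bigr)_T,
\]
with $(\beta,Y)$ subject to the martingale condition making $L$ a $\mathbb{Q}_T$-martingale. Using the convexity of $f$ together with the fact that the $\mathbb{P}$-compensator $\nu_L(dx)\,ds$ is a product measure (stationarity of the Lévy characteristics), a Jensen-type argument shows that any random or time-dependent choice $(\beta_s(\omega),Y_s(\omega,x))$ can be replaced by deterministic, time-independent $(\beta^*,Y^*(x))$ without increasing $f(\mathbb{Q}_T|\mathbb{P}_T)$. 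This reduction, which is the content of \cite{CV1,CV2}, is the main technical obstacle; everything else in the proposition is essentially a corollary of it.

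Once the minimizer corresponds to deterministic Girsanov parameters, its density is itself an exponential-Lévy martingale of the form $Z^*_t = \exp\{\lambda\cdot L_t - t\,\kappa_{\mathbb{P}}(\lambda)\}$ (Esscher transform) for the logarithmic and power cases, or the analogous minimal-entropy Lévy martingale in the exponential case. Under $\mathbb{Q}^*$, the process $L$ retains stationary independent increments with drift shifted by $c_L\beta^* + \int x(Y^*(x)-1)\nu_L(dx)$, unchanged Gaussian covariance $c_L$, and Lévy measure $Y^*(x)\nu_L(dx)$, which proves preservation of the Lévy property. Time-horizon invariance then follows because the $\mathcal{F}_t$-projection $Z^*_t$ has exactly the same parametric form with the same $(\beta^*,Y^*)$; rerunning the reduction argument on $[0,t]$ shows that $\mathbb{Q}^*_t$ minimizes $f(\cdot|\mathbb{P}_t)$ over all EMMs on $[0,t]$.

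Scaling invariance, finally, is a direct consequence of the algebra of the three dual functions. A short computation gives $f(cx)=f(x)-\ln c$ in the logarithmic case, $f(cx)=c^{p/(p-1)}f(x)$ in the power case, and, in the entropy case,
\[
f(cx) = 1 - cx + cx\ln c + cx\ln x.
\]
Taking $\mathbb{P}_T$-expectation and using $E_{\mathbb{P}_T}[d\mathbb{Q}_T/d\mathbb{P}_T]=1$, each expression becomes $A(c) + B(c)\,f(\mathbb{Q}_T|\mathbb{P}_T)$ with $B(c)>0$, so the scaled divergence $f(c\mathbb{Q}_T|\mathbb{P}_T)$ is minimized by the same measure $\mathbb{Q}^*_T$. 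This closes the proof.
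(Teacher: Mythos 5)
The paper does not actually prove this proposition: it is stated with ``(cf.\ \cite{CV1},\cite{CV2})'' and used as an imported result, so there is no in-paper argument to compare against. Your reconstruction is broadly the right one, and your treatment of scaling invariance is complete and correct: the identities $f(cx)=f(x)-\ln c$, $f(cx)=c^{p/(p-1)}f(x)$ and, after using $E_{\mathbb{P}}[Z_T]=1$, $E_{\mathbb{P}}[f(cZ_T)]=1-c+c\ln c+c\,E_{\mathbb{P}}[Z_T\ln Z_T]$ do reduce each scaled divergence to the form $A(c)+B(c)f(\mathbb{Q}_T|\mathbb{P}_T)$ with $B(c)>0$, so the minimizer is unchanged. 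Likewise, once one knows the optimal Girsanov parameters are deterministic constants, Lévy preservation and time-horizon invariance follow exactly as you say, since the characterizing equations for $(\beta^*,Y^*)$ do not involve $T$.

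Two caveats. First, the entire weight of the proposition sits on the ``Jensen-type'' reduction from predictable $(\beta_s(\omega),Y_s(\omega,x))$ to constant $(\beta^*,Y^*)$, and you assert it in one sentence. The $f$-divergence is not a pointwise convex functional of the Girsanov parameters in any obvious sense (the density is a stochastic exponential, not an integral of the parameters), so the honest route is either the Hellinger/Kullback--Leibler representation $E_{\mathbb{P}}[Z_T^\gamma]=\exp(-\int_0^T h_s\,ds)$ with $h_s$ convex in $(\beta_s,Y_s)$ under the affine drift constraint, or the decomposition-of-$f'(Z_T)$ machinery of \cite{CV1,CV2}; since the proposition is explicitly a citation, deferring is acceptable, but you should say which mechanism you are invoking. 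Second, your claim that the minimizer is an Esscher transform $\exp\{\lambda\cdot L_t-t\kappa_{\mathbb{P}}(\lambda)\}$ ``for the logarithmic and power cases'' is backwards: it is the minimal \emph{entropy} (exponential-utility) measure that is an Esscher transform of the exponential transform of $L$ (Fujiwara--Miyahara \cite{FM}); for the logarithmic and power cases $Y^*$ is generally not of exponential form. This slip does not damage the argument --- all that is needed is that $(\beta^*,Y^*)$ are deterministic constants so that $Z^*$ is a Lévy exponential martingale --- but it should be corrected.
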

From Girsanov theorem (see \cite{JSh}, Ch. 3, Theorem 3.24, p.159) we can also easily get the following. 
\begin{proposition}The Radon-Nikodym process  $\mathbb{Z^*}=(\mathbb{Z}^*_t)_{0\leq t\leq T}$ of the measure $\mathbb{Q}^{*}_T$ w.r.t. $\mathbb{P}_T$ verify:
$$\mathbb{Z}_t^*=\frac{d\mathbb{Q}^{*}_t}{d\mathbb{P}_t}= \mathbb{Z}_0^*\,\mathcal{E}(m)_t$$
where $\mathcal{E}(\cdot)$ is Dol\'eans-Dade exponential and
$m=(m_t)_{0\leq t\leq T}$ is a martingale with
$$m_t= \int_0^t\,^{\top}\!\beta ^* dY^c_s+ \int_0^t\int_{\mathbb R^d}(Y^{*}(x)-1)(\mu_{L}-\nu_{L})(ds,dx)$$
where $\mu_{L}$ and $\nu_{L}$ are jump measure of the process $L$ and its compensator under $(\mathbb{P},\mathbb{F})$.
The so-called Girsanov parameters $(\beta^*, Y^*)$ for the change of the measure $\mathbb{P}_T$ into $\mathbb{Q}_T^*$  are independent of $(\omega, t)$.
Moreover,  the drift $b^{\mathbb{Q}}_L$ of the process $L$ under $\mathbb{Q}^{*}_T$ is:
$$b^Q_L=b_L+c_L\beta^{*}+\int_{\mathbb R^d}x\cdot (Y^{*}(x)-1)\nu_L(dx)$$
\end{proposition}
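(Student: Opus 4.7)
The strategy is to apply the semimartingale Girsanov theorem (\cite{JSh}, Ch.~3, Thm~3.24) to obtain the stochastic-exponential form of $\mathbb{Z}^*$, and then to invoke Proposition~2 in order to identify the Girsanov parameters as deterministic and time-independent.

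First, since $\mathbb{Q}^*_T \sim \mathbb{P}_T$, the density process $\mathbb{Z}^*_t = d\mathbb{Q}^*_t / d\mathbb{P}_t$ is a strictly positive $(\mathbb{P}, \mathbb{F})$-martingale, hence can be written as a Dol\'eans--Dade exponential $\mathbb{Z}^*_t = \mathbb{Z}^*_0\,\mathcal{E}(m)_t$ of a local martingale $m$ with $m_0 = 0$ and $\Delta m > -1$. Writing the canonical decomposition of $m$ with respect to the continuous martingale part $L^c$ and the compensated jump measure $\mu_L - \nu_L$ of $L$, Girsanov's theorem provides $\mathbb{F}$-predictable integrands $\beta$ and $Y(\omega,s,x) > 0$ such that
\[
m_t = \int_0^t {}^{\top}\!\beta_s \, dL^c_s + \int_0^t \int_{\mathbb{R}^d} (Y(s,x) - 1)(\mu_L - \nu_L)(ds,dx).
\]

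The central step is then to argue that $\beta$ and $Y$ can be chosen independent of $(\omega, t)$. By Proposition~2, $L$ remains a L\'evy process under $\mathbb{Q}^*_T$, so its $\mathbb{Q}^*$-characteristics are deterministic and time-homogeneous. The classical Girsanov formulas give
\[
c^Q = c_L, \qquad \nu^Q(dx) = Y(x)\,\nu_L(dx), \qquad b^Q_L = b_L + c_L \beta + \int_{\mathbb{R}^d} x (Y(x) - 1)\,\nu_L(dx).
\]
Deterministic and time-constant left-hand sides force the existence of deterministic versions $\beta^*$ and $Y^*$ of the integrands (with $Y^*$ determined $\nu_L$-a.e.); the time-horizon invariance part of Proposition~2 guarantees in addition that this choice does not depend on $T$. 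The stated drift formula for $b^Q_L$ is then exactly the last display evaluated at $(\beta^*, Y^*)$.

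The only non-routine point is the passage from ``predictable'' to ``deterministic'' integrands in the last step, which is precisely where Proposition~2 (L\'evy preservation together with time-horizon invariance of $\mathbb{Q}^*_T$) does the work. The rest is a direct, essentially book-keeping, application of the classical Girsanov theorem.
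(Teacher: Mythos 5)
Your argument is correct and follows the same route the paper intends: the paper gives no written proof, simply asserting that the statement follows from the semimartingale Girsanov theorem (Jacod--Shiryaev III.3.24) combined with the L\'evy-preservation and invariance properties, which is exactly the reduction you carry out (note only that the L\'evy-preservation result you cite is the paper's Proposition~1, not Proposition~2, and that the integral representation of $m$ uses the martingale representation property of the L\'evy filtration in addition to Girsanov's theorem). No gaps.
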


\begin{proposition}\label{GR}(cf.\cite{GR},\cite{CV1}) Let $\mathbb{Z}^*_T$ be Radon-Nikodym derivative of $f$-divergence minimal martingale measure $\mathbb{Q}^*_T$ with respect to $\mathbb{P_T}$. Let $x_0>0$ be the initial capital. We suppose that 
for  $\lambda _0>0$ 
such that
\begin{equation}\label{initial}
-E_{\mathbb{P}}(\mathbb{Z}^*_T\,f'(\lambda_0 \mathbb{Z}^*_T))=x_0
\end{equation}
we have
$$E_{\mathbb{P}}(|f(\lambda_0 \mathbb{Z}^*_T)|)<\infty, \,\,\,E_{\mathbb{P}}(\mathbb{Z}^*_T\,|f'(\lambda_0 \mathbb{Z}^*_T)|)<\infty . $$ 
Then  there exists an optimal (respectively asymptotically optimal) strategy $\hat{\phi}$ such that
$$-f'(\lambda _0\mathbb{Z}^*_T)=x_0+\sum_{k=1}^d\int_0^T  \hat{\phi}^{(k)}_s dS^{(k)}_s$$
where $(\int_0^{\cdot}  \hat{\phi}^{(k)}_s dS^{(k)}_s)$  are $\mathbb{Q}^*_T$-martingales, $1\leq k\leq d$.
If we choose $\hat{\phi}^0_t$ such that
$$B_t\,\hat{\phi}^0_t= x_0+  \sum_{k=1}^d \int_0^t  \hat{\phi}^{(k)}_s dS^{(k)}_s-\sum_{k=1}^d \hat{\phi}^{(k)}_t S^{(k)}_t$$
then the strategy $\Phi=(\hat{\phi}^0,\hat{\phi}^{(1)},\cdots,\hat{\phi}^{(d)})$ is self-financing and it is optimal for logarithmic and power utility and asymptotically optimal for exponential utility.
Moreover, the maximal expected utility  $$U_T(x_0)=E_{\mathbb{P}}[u(-f'(\lambda _0\mathbb{Z}^*_T))].$$
\end{proposition}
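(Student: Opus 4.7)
The plan is to apply Fenchel-Legendre duality in the spirit of Goll-Ruschendorf \cite{GR}. The backbone is the pointwise inequality $u(x)\le f(y)+xy$, valid for all admissible $x$ and all $y>0$, with equality iff $x=-f'(y)$. Applied to the random pair $(V_T^\phi,\lambda_0\mathbb{Z}_T^*)$, where $V_T^\phi:=x_0+\sum_{k=1}^d\int_0^T\phi_s^{(k)}dS_s^{(k)}$ is the terminal wealth of an arbitrary $\phi\in\mathcal{A}$, it gives
$$E_{\mathbb{P}}[u(V_T^\phi)]\le E_{\mathbb{P}}[f(\lambda_0\mathbb{Z}_T^*)]+\lambda_0\,E_{\mathbb{P}}[\mathbb{Z}_T^* V_T^\phi].$$
Admissibility ($\sum_k\int_0^\cdot\phi\,dS\ge -a$) combined with the $\mathbb{Q}_T^*$-local martingale property of the $S^{(k)}$'s makes the gains process a $\mathbb{Q}_T^*$-supermartingale, so $E_{\mathbb{P}}[\mathbb{Z}_T^* V_T^\phi]\le x_0$. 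The integrability hypothesis on $f(\lambda_0\mathbb{Z}_T^*)$ makes the right-hand side finite and produces the universal upper bound $E_{\mathbb{P}}[u(V_T^\phi)]\le E_{\mathbb{P}}[f(\lambda_0\mathbb{Z}_T^*)]+\lambda_0 x_0$, which is independent of~$\phi$.

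Next I would show that this bound is attained. The Fenchel equality case prescribes the candidate terminal wealth $V_T^*:=-f'(\lambda_0\mathbb{Z}_T^*)$; the first-order condition \eqref{initial} reads $E_{\mathbb{Q}^*}[V_T^*]=x_0$, so the closed $\mathbb{Q}_T^*$-martingale $M_t:=E_{\mathbb{Q}^*}[V_T^*\mid\mathcal{F}_t]$ starts at $x_0$ and ends at $V_T^*$, the second integrability hypothesis ensuring its well-definedness. Proposition 1 guarantees that $L$ remains a L\'evy process under $\mathbb{Q}_T^*$, with new triplet supplied by Proposition 2, so the pair $(L^c,\mu_L-\nu_L^{\mathbb{Q}^*})$ enjoys the predictable representation property for $\mathbb{Q}_T^*$-martingales of $\mathbb{F}$. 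This lets one write $M$ as a stochastic integral with respect to $L^c$ and the compensated jump measure, and re-expressing everything via the Dol\'eans-Dade relation between $L$ and $S$ produces predictable integrands $\hat\phi^{(1)},\dots,\hat\phi^{(d)}$ with
$$M_t=x_0+\sum_{k=1}^d\int_0^t\hat\phi_s^{(k)}\,dS_s^{(k)},$$
each component integral being a $\mathbb{Q}_T^*$-martingale. Defining $\hat\phi^0$ by the bank-account equation in the statement makes $\Phi=(\hat\phi^0,\hat\phi)$ self-financing with terminal wealth exactly $V_T^*$; the Fenchel inequality is then an equality for this particular strategy, matching the upper bound and yielding $U_T(x_0)=E_{\mathbb{P}}[u(-f'(\lambda_0\mathbb{Z}_T^*))]$.

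The main obstacle is the admissibility of $\hat\phi$. For the logarithmic and power utilities one has $-f'(y)>0$, hence $V_T^*\ge 0$ and $\hat\phi\in\mathcal{A}$ with constant $a=x_0$, giving outright optimality. For the exponential utility, however, $-f'(y)=-\ln y$ is unbounded from below, so $\hat\phi$ need not satisfy the uniform lower bound required by $\mathcal{A}$; this is why only asymptotic optimality is claimed. The standard remedy is to truncate, stopping the wealth process at the first passage below $-n$ to obtain $\hat\phi^n\in\mathcal{A}$, and then to use the two integrability hypotheses together with dominated convergence to pass $E_{\mathbb{P}}[u(V_T^{\hat\phi^n})]\to E_{\mathbb{P}}[u(V_T^*)]$. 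Reconciling the unconstrained dual optimum with the admissibility cone on the primal side is the delicate point; once it is handled, the remainder of the proof is a direct reading of the Fenchel identity.
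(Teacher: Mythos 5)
The paper does not actually prove this proposition; it is recalled from \cite{GR} and \cite{CV1}, so the comparison is with the argument of those references, which is the Fenchel--Legendre duality you reproduce. Your upper-bound half is sound: the conjugacy inequality $u(x)\le f(y)+xy$, the supermartingale property of admissible gains under $\mathbb{Q}^*_T$ (they are bounded below and $S$ is a $\mathbb{Q}^*_T$-martingale), and the budget identity $E_{\mathbb{Q}^*}[-f'(\lambda_0\mathbb{Z}^*_T)]=x_0$ read off from \eqref{initial} are used correctly, as is the observation that equality in Fenchel's inequality at terminal wealth $-f'(\lambda_0\mathbb{Z}^*_T)$ gives the stated value of $U_T(x_0)$. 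The truncation device for the exponential utility, yielding only asymptotic optimality, is also the standard one.

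The genuine gap is in the attainment step. The predictable representation property under $\mathbb{Q}^*_T$ writes $M_t=E_{\mathbb{Q}^*}[-f'(\lambda_0\mathbb{Z}^*_T)\mid\mathcal{F}_t]$ as $x_0+\int_0^t H_s\,dL^c_s+\int_0^t\int_{\mathbb{R}^d}W(s,x)\,(\mu_L-\nu_L^{\mathbb{Q}^*})(ds,dx)$ with a jump integrand $W(s,x)$ that is an arbitrary predictable function of the jump size $x$, whereas a gains process $\sum_k\int\hat\phi^{(k)}_s\,dS^{(k)}_s$ has jump integrand \emph{linear} in $x$, namely $\sum_k\hat\phi^{(k)}_sS^{(k)}_{s-}x^{(k)}$. ``Re-expressing via the Dol\'eans-Dade relation'' therefore does not by itself produce the $\hat\phi^{(k)}$: in an incomplete jump market most $\mathbb{Q}^*_T$-martingales are not attainable, and for a non-minimal equivalent martingale measure $\mathbb{Q}$ the identity $-f'(\lambda_0 Z_T^{\mathbb{Q}})=x_0+\sum_k\int_0^T\hat\phi^{(k)}_s\,dS^{(k)}_s$ is generally false. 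The missing ingredient is precisely the minimality of $\mathbb{Q}^*_T$: the first-order conditions of the $f$-divergence minimisation --- equivalently, the specific relation between the Girsanov parameters $(\beta^*,Y^*)$ that characterises the minimal measure for HARA $f$ --- are what force the jump integrand of $M$ to be linear in $x$ and hence make $-f'(\lambda_0\mathbb{Z}^*_T)$ replicable. Your argument never invokes minimality after establishing the upper bound, so as written the lower bound (and with it the existence of $\hat\phi$) is not established.
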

\section{Hellinger integrals, Kulback-Leibler informations and the corresponding processes}
We recall here some useful results on Hellinger integrals and Kulback-Leibler informations and corresponding processes. As known, these notions was introduced in semi-martingale setting and one can
find the details about these processes in \cite{JSh}, \cite{Ko}.  For convenience of the readers we will restrict ourself to the case of the processes with independent increments.
\par  Let $V$ be the process with independent increments which is a semi-martingale without predictable jumps, observed on the interval $[0,T]$.  We denote the triplet of the semi-martingale characteristics by $(B, C, \nu )$ where $B$ stands for the drift part, $C$ is the predictable variation of the continuous martingale part, and $\nu$ is the compensator of the jumps of the process $V$.
We denote $P_T$ the law of $V$ which is entirely defined by the triplet $(B, C, \nu )$ (see for the details \cite{JSh}, Ch.2, p.114).
\par Let $Q_T\ll P_T$ and  $Z_T=\frac{dQ_T}{dP_T}$ with the Girsanov parameters $(\beta,Y)$. 
We recall that the Hellinger integral of order $\gamma\in ]0,1[$ of the measure $Q_T$ w.r.t. the measure $P_T$
$${\bf H}^{(\gamma)}(Q_T\,|\,P_T)= E_{\mathbb P}\left [Z_T^{\gamma}\right ]$$
and this definition can be extended for $\gamma <0$ when the integral exists.  We define the Hellinger process $H{(\gamma)}=(H_t
{(\gamma )})_{0\leq t\leq T}$ of the order $\gamma$ via the expression
$$H_t(\gamma)= \frac{1}{2}\gamma (1-\gamma)\int_0^t\, ^{\top}\beta_s\, dC_s\,\beta_s-\int_0^t\int_{\mathbb{R}^d}\left [Y^{\gamma}_s(x)-\gamma Y_s(x)-1+\gamma\right ]\nu(ds,dx)$$
 The Kulback-Leibler information 
of the measure $Q_T$ w.r.t. the measure $P_T$ is defined as 
$${\bf K}(Q_T\,|\,P_T)=E_{\mathbb P}\left [Z_T\,\ln(Z_T)\right ]$$
\par
The Kulback-Leibler process $K=(K_t)_{0\leq t\leq T}$ is defined as
$$K_t= \frac{1}{2}\int_0^t\, ^{\top}\beta_s\,dC_s\,\beta+\int_0^t\int_{\mathbb{R}^d}[Y_s(x)\ln(Y_s(x) )-Y_s(x)+1]\nu(ds,dx)$$
\begin{proposition}\label{HK} (see \cite{EV}) For the process with independent increments $V$ and Girsanov parameters $(\beta, Y)$ of the change of the measure $P$ into $Q$, which   depend only on (t,x), we have:
$${\bf H}(Q_T\,|\,P_T)=\exp(-H_T(\gamma )) ,$$
$${\bf K}(Q_T\,|\,P_T)=K_T.$$
\end{proposition}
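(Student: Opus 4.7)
My plan is to prove the two identities by finding, for each, an auxiliary $P$-local martingale that evaluates at $0$ to a simple constant and at $T$ to the quantity of interest, then passing to the expectation. The crucial structural observation is that because $V$ has independent increments (hence deterministic characteristics $(B,C,\nu)$) and the Girsanov parameters $(\beta,Y)$ depend only on $(s,x)$ and not on $\omega$, both $H_\cdot(\gamma)$ and $K_\cdot$ are \emph{deterministic}, absolutely continuous functions of time.

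For the Hellinger identity, I would start from $Z=\mathcal{E}(N)$ with
\[N_t=\int_0^t{}^{\top}\!\beta_s\,dV^c_s+\int_0^t\int_{\mathbb R^d}(Y_s(x)-1)(\mu-\nu)(ds,dx),\]
and apply It\^o's formula to $f(z)=z^{\gamma}$. Using $dZ=Z_{-}\,dN$, $\Delta Z=Z_{-}\Delta N=Z_{-}(Y-1)$ and $d\langle Z^c\rangle=Z_{-}^2{}^{\top}\!\beta\,dC\,\beta$, the drift terms read
\[Z_{t-}^{\gamma}\Bigl[\tfrac12\gamma(\gamma-1){}^{\top}\!\beta_t\,dC_t\,\beta_t+\int_{\mathbb R^d}\bigl(Y_t^{\gamma}-\gamma Y_t-1+\gamma\bigr)\nu(dt,dx)\Bigr]=-Z_{t-}^{\gamma}\,dH_t(\gamma),\]
so $dZ_t^{\gamma}=dM_t-Z_{t-}^{\gamma}\,dH_t(\gamma)$ for a $P$-local martingale $M$. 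Because $H_t(\gamma)$ is deterministic and continuous, the deterministic integration-by-parts formula gives $d\bigl(Z_t^{\gamma}\,e^{H_t(\gamma)}\bigr)=e^{H_t(\gamma)}\,dM_t$, so $Z^{\gamma}e^{H(\gamma)}$ is a $P$-local martingale starting from $1$. Taking expectation (after the usual localisation and a uniform integrability check, which is routine because $H_T(\gamma)$ is finite and deterministic, and $Z_T^{\gamma}$ is $P$-integrable by hypothesis), we obtain $E_{\mathbb P}[Z_T^{\gamma}]=e^{-H_T(\gamma)}$, i.e.\ $\mathbf{H}^{(\gamma)}(Q_T|P_T)=\exp(-H_T(\gamma))$.

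For the Kullback--Leibler identity the approach is analogous but shorter. I apply It\^o's formula to $g(z)=z\ln z$. The continuous part contributes $\tfrac12 Z_{t-}{}^{\top}\!\beta_t\,dC_t\,\beta_t$, while a direct computation of the jump sum using $Z_{s-}+\Delta Z_s=Z_{s-}Y_s$ gives a telescoping simplification yielding $Z_{s-}[Y_s\ln Y_s-Y_s+1]$. Compensating against $\mu-\nu$ and collecting terms produces $d(Z_t\ln Z_t)=dM'_t+Z_{t-}\,dK_t$ with $M'$ a $P$-local martingale. Since $K_t$ is deterministic, Fubini gives $E_{\mathbb P}[Z_T\ln Z_T]=\int_0^T E_{\mathbb P}[Z_{s-}]\,dK_s=K_T$, because $Z$ is a $P$-martingale with $E_{\mathbb P}[Z_{s-}]=1$.

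The main obstacle, in both cases, is the passage from local to true martingale: for the Hellinger computation one must check that the jump integrand $(Y^{\gamma}-1)$ is in the appropriate space w.r.t.\ $\nu$ (which follows from finiteness of $H_T(\gamma)$ via the elementary inequality bounding $|Y^{\gamma}-1|^2$ by $|Y^{\gamma}-\gamma Y-1+\gamma|$ up to constants on the sets $\{Y\le 2\}$ and $\{Y>2\}$), and for the $K$-L case one must similarly control $Y\ln Y$. Once these integrability conditions are settled --- and they are subsumed under the standing assumption that the $f$-divergence is finite --- the rest is a clean It\^o calculation exploiting the determinism of $H(\gamma)$ and $K$.
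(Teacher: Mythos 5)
The paper gives no proof of this proposition --- it is quoted from \cite{EV} --- so there is nothing to compare line by line; judged on its own, your argument is correct and is the standard direct verification. Your It\^o computations check out: for $f(z)=z^{\gamma}$ the drift of $Z^{\gamma}$ is exactly $-Z_{-}^{\gamma}\,dH(\gamma)$ (the sign convention in the paper's definition of $H(\gamma)$ matches), and for $g(z)=z\ln z$ the jump term does telescope to $Z_{-}[Y\ln Y-Y+1]$, giving drift $Z_{-}\,dK$. The one step you should make fully explicit rather than call routine is the passage from local to true martingale, since that is where the \emph{equality} ${\bf H}^{(\gamma)}(Q_T|P_T)=e^{-H_T(\gamma)}$ (as opposed to the general semimartingale inequality) actually comes from. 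For $\gamma\in(0,1)$ your observation does the job cleanly: $E_{\mathbb P}\bigl[(Z^{\gamma}_{\tau}e^{H_{\tau}(\gamma)})^{1/\gamma}\bigr]=e^{H_{\tau}(\gamma)/\gamma}\le e^{H_T(\gamma)/\gamma}$ uniformly over stopping times $\tau\le T$ because $H(\gamma)$ is deterministic, nonnegative and increasing, so the local martingale is of class (D). For the extension to $\gamma<0$ that the paper invokes (``when the integral exists''), $L^{1/\gamma}$-boundedness is unavailable and an extra hypothesis such as $E_{\mathbb P}[Z_T^{\gamma}]<\infty$ together with a sub/supermartingale sandwich is needed; similarly, in the Kullback--Leibler case the limit $E_{\mathbb P}[Z_{T\wedge\tau_n}\ln Z_{T\wedge\tau_n}]\to E_{\mathbb P}[Z_T\ln Z_T]$ should be justified by combining Fatou (using $z\ln z\ge -e^{-1}$) with the submartingale property of $Z\ln Z$. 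With those two sentences added, the proof is complete.
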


\section{F-divergence minimal martingale measures}
First of all we describe the set of all equivalent martingale measures living on the space of c\`adl\`ag functions $(D^2([0,T]),\mathcal{D}^2([0,T]))$ of the trajectories of the couple of the processes $(X,\alpha)$. Let $\mathbb{P}_T$ denote the law of $(X,\alpha)$ and let $P_T^X$ and $P_T^{\alpha}$ are the laws of $X$ and $\alpha$ respectively, on $[0,T]$. We denote also by $P_T^X(\cdot\,|\, \alpha)$ the regular conditional law of $X$ given $\alpha$.
\begin{proposition}The law $\mathbb{Q}_T\ll \mathbb{P}_T$ if and only if there exists a regular conditional law of  $X$, denoted by $Q^X_T(\cdot\,|\,\alpha)$ such that $Q^X_T(\cdot\,|\,\alpha)\ll
P^X_T(\cdot\,|\,\alpha)$ ($P^{\alpha}_T$-a.s.) and the law  of $\alpha$, denoted $Q_T^{\alpha}$, such that $Q^{\alpha}_T\ll P^{\alpha}_T$. Moreover,
$\mathbb{P}_T$-a.s.
$$\frac{d\mathbb{Q}_T}{d\mathbb{P}_T}= \frac{dQ^X_T(\cdot\,|\,\alpha)}{dP^X_T(\cdot\,|\,\alpha)}
\,\frac{dQ^{\alpha}_T}{dP^{\alpha}_T}$$
\end{proposition}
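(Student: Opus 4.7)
The plan is to apply the standard disintegration theorem for probability measures on Polish product spaces. Since the Skorokhod space $D([0,T])$ is Polish, so is $D^2([0,T])$, and regular conditional laws of $X$ given $\alpha$ exist under any Borel probability measure on this product space; this is what makes the disintegration formula available in both directions.

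For the necessity direction, I would start from $Z_T=d\mathbb{Q}_T/d\mathbb{P}_T$, define $Q^\alpha_T$ as the pushforward of $\mathbb{Q}_T$ under the projection onto the $\alpha$ coordinate, and check $Q^\alpha_T\ll P^\alpha_T$ by the trivial observation that a $P^\alpha_T$-null set pulls back to a $\mathbb{P}_T$-null cylinder, hence to a $\mathbb{Q}_T$-null cylinder. The density $h(\alpha):=dQ^\alpha_T/dP^\alpha_T$ is then identified, via the characterization of conditional expectation, with a version of $E_{\mathbb{P}}[Z_T\,|\,\sigma(\alpha)]$. Setting $g(\alpha,x):=Z_T(x,\alpha)/h(\alpha)$ on $\{h>0\}$ and defining $Q^X_T(dx|\alpha):=g(\alpha,x)\,P^X_T(dx|\alpha)$, a direct Fubini-plus-tower computation gives, for every bounded Borel $F$,
$$E_{\mathbb{Q}}\left[F(X,\alpha)\right]=E_{\mathbb{P}}\left[F(X,\alpha)\,Z_T\right]=\int\!\!\int F(x,\alpha)\,Q^X_T(dx|\alpha)\,Q^\alpha_T(d\alpha),$$
which simultaneously identifies $Q^X_T(\cdot|\alpha)$ as the regular conditional law of $X$ given $\alpha$ under $\mathbb{Q}_T$ and certifies $Q^X_T(\cdot|\alpha)\ll P^X_T(\cdot|\alpha)$ for $P^\alpha_T$-a.e.\ $\alpha$. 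The product formula $Z_T=g\cdot h$ is then built into the construction.

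For the sufficiency direction, starting from the two densities I would set $Z_T(x,\alpha):=g(\alpha,x)\,h(\alpha)$ and define $\mathbb{Q}_T$ by $d\mathbb{Q}_T=Z_T\,d\mathbb{P}_T$. Any $\mathbb{P}_T$-null set $N$ has $P^X_T(\cdot|\alpha)$-null slices $N_\alpha$ for $P^\alpha_T$-a.e.\ $\alpha$ by Fubini, and the hypothesis $Q^X_T(\cdot|\alpha)\ll P^X_T(\cdot|\alpha)$ then forces $Q^X_T(N_\alpha|\alpha)=0$ on that $\alpha$-set, so $\mathbb{Q}_T(N)=0$. The only delicate point to watch is the existence of a jointly measurable version of $g$ and of the regular conditional laws, together with proper handling of the $\mathbb{P}$-null set $\{h=0\}$ (which is also $\mathbb{Q}$-null, so the division is harmless); both are guaranteed by the Polish structure of the trajectory spaces. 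Apart from these bookkeeping issues the argument is purely measure-theoretic and does not use anything specific to the L\'evy switching structure.
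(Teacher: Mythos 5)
Your proof is correct and follows essentially the same disintegration-of-measures route as the paper: both directions reduce to factoring the Radon--Nikodym density through the marginal law of $\alpha$ and the conditional law of $X$ given $\alpha$ on the Polish product space. If anything, your write-up is slightly more careful than the paper's --- you construct the conditional density explicitly as $Z_T/E_{\mathbb{P}}[Z_T\,|\,\sigma(\alpha)]$ and establish the converse by a Fubini argument on the slices of a $\mathbb{P}_T$-null set, whereas the paper identifies the densities by testing against rectangles $A\times B$ and, for the converse, appeals to a countable generating class of rectangles (a step your slice argument makes rigorous).
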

\begin{proof}
 For all $A,B\in \mathcal{D}([0,T])$
$$\mathbb{P}_T(A\times B) = \int_B P^X_T(A\,|\,\alpha=y)\,dP_T^{\alpha}(y) = 
\int_{A\times B} dP^X_T(x\,|\,\alpha=y)\,dP_T^{\alpha}(y)$$
and
$$\mathbb{Q}_T(A\times B) = \int_B Q^X_T(A\,|\,\alpha=y)\,dQ_T^{\alpha}(y) =
\int_{A\times B} dQ^X_T(x\,|\,\alpha=y)\,dQ_T^{\alpha}(y)$$
At the same time, the condition $\mathbb{Q}_T\ll \mathbb{P}_T$ is equivalent to the existence of a density $f(x,y)$ such that
$$\mathbb{Q}_T(A\times B) = \int_{A\times B}f(x,y)\,d\mathbb{P}_T(x,y) = 
\int_{A\times B}f(x,y)\, dP^X_T(x\,|\,\alpha=y)\,dP_T^{\alpha}(y)$$
In addition, obviously $\mathbb{Q}_T\ll \mathbb{P}_T$ implies the condition $Q^{\alpha}_T\ll P^{\alpha}_T$ and 
$$\mathbb{Q}_T(A\times B) = \int_{A\times B} dQ^X_T(x\,|\,\alpha=y)\,\xi_T^{(\alpha)}(y)\,dP_T^{\alpha}(y)$$
where $\xi_T^{(\alpha)}=\frac{dQ_T^{\alpha}}{dP_T^{\alpha}}$.
Since these relations hold for all $A,B\in \mathcal{D}([0,T])$ we get that $\mathbb{P}_T$-a.s.
$$\frac{d\mathbb{Q}_T}{d\mathbb{P}_T}(x,y)=f(x,y) = \frac{dQ^X_T(x\,|\,\alpha=y)}{dP^X_T(x\,|\,\alpha=y)}\,\xi_T^{(\alpha)}(y)$$
Conversely, if $Q^X_T(\cdot\,|\,\alpha)\ll
P^X_T(\cdot\,|\,\alpha)$ ($P^{\alpha}_T$-a.s.) and $Q^{\alpha}_T\ll P^{\alpha}_T$, then evidently $\mathbb{P}(A\times B)=0$ implies $\mathbb{Q}(A\times B)=0$. Since the $\sigma$-algebra $\mathcal{D}^2([0,T])$ can be generated by countable set of the events of the type $A\times B$,
it gives the claim.
\end{proof}
\par Let us suppose that the L\'evy processes $X^{(j)}, j=1,\cdots , N$, are integrable, i.e. for $t\in[0,T]$,
$E_{\mathbb{P}}\left[|X^{(j)}_t|\right]< \infty$. We denote by $\mathcal M^{(j)}$  the set of equivalent martingale measures for $X^{(j)}$. Let $Q^{(j)} \in \mathcal M^{(j)}$ and let $(\beta^{(j)},Y^{(j)})$ be the Girsanov parameters for the change of the measure $P^{(j)}$ into $Q^{(j)}$.
\par Let $\mathcal M$ denote the set of equivalent martingale measures for $X$ such that the Girsanov parameters for the change of the measure $\mathbb{P}$ into $\mathbb{Q}$ verify the conditions: 
\[\int_0^t|Y_s(x)-1|\nu_X(dx)ds<\infty\text{ and }\int_0^t\|\beta_s\|^2ds<\infty,\]
where $\nu_X$ is the compensator of the measure of jumps of the process $X$.
 We put then for all $t \ge 0$ and $x \in \mathbb R^d$
\begin{equation}
\label{form4beta}
\beta_t=\sum_{j=1}^N\beta_t^{(j)}I_{\{\alpha_{t-}=j\},}
\end{equation} 
\begin{equation}
\label{form4y}
Y_t(x)=\sum_{j=1}^N Y_t^{(j)}(x)I_{\{\alpha_{t-}=j\}.}
\end{equation}
We introduce also a process $m=(m_t)_{0\leq t\leq T}$ such that
\begin{equation}
\label{PRP_m}
m_t=\int_0^t\,^{\top}\!\beta_sdX_s^c+\int_0^t\int_{\mathbb R^d}(Y_s(x)-1)(\mu_X-\nu_X)(ds,dx)
\end{equation}
where $\mu_X$ and $\nu_X$ are jump measure of $X$ and its compensator respectively.\\
\begin{proposition}
The set $\mathcal M \neq \emptyset $ if and only if for all $j=1,2,...,N$, $\mathcal M^{(j)} \neq \emptyset$. Moreover, if $\mathbb{Q}_T\in\mathcal{M}$, the corresponding Girsanov parameters satisfy (\ref{form4beta}) and (\ref{form4y}) and
$$\frac{d\mathbb{Q}_T}{d\mathbb{P}_T}= \xi^{(\alpha)}_T\,\,\mathcal{E}(m)_T\, $$
with $m$  defined by \eqref{PRP_m} and $\xi^{(\alpha)}_T=\frac{dQ_T^{\alpha}}{dP_T^{\alpha}}$.
\end{proposition}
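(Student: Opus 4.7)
The plan is to prove the two implications separately, and to obtain the density formula as a byproduct. I would organise the argument around Girsanov's theorem for semimartingales (see \cite{JSh}, Ch.~III, Thm.~3.24) and the factorisation given by the preceding Proposition~5.

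For the ``if'' direction I pick for each $j$ some $Q^{(j)}\in\mathcal{M}^{(j)}$ with Girsanov parameters $(\beta^{(j)},Y^{(j)})$, define $(\beta,Y)$ by (\ref{form4beta})--(\ref{form4y}), form $m$ via (\ref{PRP_m}), and set $d\mathbb{Q}_T/d\mathbb{P}_T=\xi_T^{(\alpha)}\mathcal{E}(m)_T$ for any density $\xi_T^{(\alpha)}$ with $Q_T^{\alpha}\sim P_T^{\alpha}$ (for instance $\xi_T^{(\alpha)}\equiv 1$). Proposition~5 gives $\mathbb{Q}_T\sim\mathbb{P}_T$, and the task reduces to checking that $X$ is an $(\mathbb{F},\mathbb{Q}_T)$-martingale. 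The $\mathbb{P}$-semimartingale characteristics of $X$ are
\[
b_X(t)=\sum_{j=1}^N b^{(j)}I_{\{\alpha_{t-}=j\}},\quad c_X(t)=\sum_{j=1}^N c^{(j)}I_{\{\alpha_{t-}=j\}},\quad \nu_X(dt,dx)=\sum_{j=1}^N I_{\{\alpha_{t-}=j\}}\nu^{(j)}(dx)\,dt,
\]
so Girsanov gives the new drift of $X$ as $b_X(t)+c_X(t)\beta_t+\int x(Y_t(x)-1)\nu_X(t,dx)$. Substituting (\ref{form4beta})--(\ref{form4y}) and using $I_{\{\alpha_{t-}=j\}}I_{\{\alpha_{t-}=k\}}=0$ for $j\neq k$, this collapses to $\sum_j I_{\{\alpha_{t-}=j\}}\bigl[b^{(j)}+c^{(j)}\beta^{(j)}_t+\int x(Y^{(j)}_t(x)-1)\nu^{(j)}(dx)\bigr]$, and each bracket vanishes since $X^{(j)}$ is a $Q^{(j)}$-martingale.

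For the ``only if'' direction, given $\mathbb{Q}_T\in\mathcal{M}$, Proposition~5 factorises $d\mathbb{Q}_T/d\mathbb{P}_T=\xi_T^{(\alpha)}\cdot dQ_T^X(\cdot|\alpha)/dP_T^X(\cdot|\alpha)$ with $Q_T^{\alpha}\sim P_T^{\alpha}$. Girsanov applied to the change $\mathbb{P}_T\to\mathbb{Q}_T$ then yields predictable $(\beta,Y)$ such that the conditional density equals $\mathcal{E}(m)_T$ with $m$ of the form (\ref{PRP_m}); this already gives the density formula asserted. The martingale property of $X$ forces the identity $b_X+c_X\beta+\int x(Y(x)-1)\nu_X(dx)=0$, and reading it on the set $\{\alpha_{t-}=j\}$ using the piecewise characteristics above yields $b^{(j)}+c^{(j)}\beta_t+\int x(Y_t(x)-1)\nu^{(j)}(dx)=0$. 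Setting $\beta^{(j)}_t:=\beta_t$ and $Y^{(j)}_t(x):=Y_t(x)$ on $\{\alpha_{t-}=j\}$ and extending off this set by any reference admissible Girsanov parameters for $X^{(j)}$ produces a martingale measure in $\mathcal{M}^{(j)}$, and (\ref{form4beta})--(\ref{form4y}) hold by construction.

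The main obstacle I expect is this ``only if'' half: extracting Girsanov parameters $(\beta^{(j)},Y^{(j)})$ that are admissible for $X^{(j)}$ alone from the $(\omega,t,x)$-dependent parameters $(\beta,Y)$ of the switching process. Concretely, the restriction of $(\beta,Y)$ to $\{\alpha_{t-}=j\}$ must extend to a valid set of Girsanov parameters for the L\'evy process $X^{(j)}$ on the whole time interval, which is harmless provided $\alpha$ visits state $j$ with positive probability; the degenerate cases where a state is never reached require a separate argument (or an implicit hypothesis on $\alpha$) to guarantee $\mathcal{M}^{(j)}\neq\emptyset$.
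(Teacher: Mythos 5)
Your proof follows essentially the same route as the paper: factorize the density via Proposition~5, apply Girsanov to the piecewise (switching) characteristics, and read the zero-drift condition on each set $\{\alpha_{t-}=j\}$ to identify the restricted Girsanov parameters with those of an equivalent martingale measure for the individual $X^{(j)}$. The only slip is that the martingale condition must involve the full drift $a^{(j)}=b^{(j)}+\int_{\mathbb{R}^d}x\,I_{\{\|x\|>1\}}\nu^{(j)}(dx)$ of the integrable process rather than the truncated characteristic $b^{(j)}$; your closing caveat about states never visited by $\alpha$ is legitimate but is also left implicit in the paper's own argument.
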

\begin{proof}Since the L\'evy processes $X^{(1)},X^{(2)},...,X^{(N)}$ are integrable, for $ j=1,2,...,N$, we have
\[X_t^{(j)}=a^{(j)}t+M_t^{(j)},\]
where $a^{(j)}=b^{(j)}+\int_{\mathbb R^d}xI_{\{\|x\|>1\}}\nu^{(j)}(dx)$ and 
\[M_t^{(j)}=\sqrt{c^{(j)}}W_t^{(j)} + \int_0^t\int_{\mathbb R^d}x\,(\mu^{(j)}-\nu^{(j)})(ds,dx)\]
with the symetric matrix $\sqrt{c^{(j)}}$ such that $(\sqrt{c^{(j)}})^2=c^{(j)}$, and $\mu^{(j)}$  the measure of jumps of $X^{(j)}$. We recall that $M^{(j)}$ in this case is a martingale with respect to the filtration $\mathbb G$. But, since L\'evy processes and the Markov process $\alpha$ are independent, it is also a martingale with respect to the filtration  $\hat{\mathbb F}$ (cf. \cite{CJZ}) since the immersion property holds. Moreover, from \eqref{definition switching} we get
\begin{equation*}
\begin{split}
X_t=&\sum_{j=1}^N a^{(j)}\int_0^tI_{\{\alpha_{s-}=j\}}ds+\sum_{j=1}^N\int_0^tI_{\{\alpha_{s-}=j\}}dM_s^{(j)}\\
=&A_t+M_t
\end{split}
\end{equation*}
It is clear that $M=(M_t)_{0\leq t\leq T}$ is a local martingale as a sum of integrals of bounded functions w.r.t. martingales. Since $S_t^{(k)}=\mathcal E(\bar{X}^{(k)})_t,k=1,2,...,d$, the martingale measures for $S^{(k)}$ and $\bar{X}^{(k)}$ coincide, and they are the measures which remove the drift $A=(A_t)_{0\leq t\leq T}$ of the process $X$. The canonical decomposition of $X$ in the initially enlarged filtration coincide with the canonical decomposition of $X$ conditionally to $\alpha$, as the maps on $\Omega_1\times \Omega_2$. Then the drift of $X$ under $\mathbb{Q}$ is equal as a map to the drift of $X$ under $Q^X_T(\cdot|\alpha)$.\\
Let  $Z=(Z_t)_{0\leq t\leq T}$ be the Radon-Nikodym process of an equivalent conditional measure $Q^X(\cdot|\alpha)$ w.r.t. $P^X(\cdot|\alpha)$. It can be always written as
 $$Z_t=\mathcal E(m)_t,$$ where $m=(m_t)_{t\ge 0}$  is a local martingale of the form \eqref{PRP_m}.
According to Girsanov theorem, the drift $A^{Q}$ of the process $X$ under $Q^X(\cdot|\alpha)$ is equal to:
\[A_t^Q=A_t+\int_0^tc_s\beta_sds+\int_0^t\int_{\mathbb R^d}x\cdot (Y_s(x)-1)\nu_X(dx, ds)\]
where $c=( c_s)_{s\ge0}$ is the density w.r.t Lebesgue measure of the quadratic variation of the continuous martingale part of $X$  and $\nu_X$ is the compensator of the jump measure of $X$ w.r.t. $(\mathbb{P},\hat{\mathbb F})$.\\
Let us calculate $ c_t$ and $\nu_X$. Since the continuous martingale  part $X^c$ of $X$ is equal in law to $(\sum_{j=1}^N\int_0^tI_{\{\alpha_{s-}=j\}}\sqrt{c^{(j)}}dW_s^{j})_{0\leq t\leq T}$ with independent standard Brownian motions $W^{(j)}$, the quadratic variation of $X^c$  at $t$ is equal to
\[ C_t=\sum_{j=1}^Nc^{(j)}\int_0^tI_{\{\alpha_{s-}=j\}}ds\]
and its density w.r.t. Lebesgue measure is
\begin{equation}\label{form4c}
 c_t=\sum_{j=1}^Nc^{(j)}I_{\{\alpha_{t-}=j\}}
\end{equation}
At the same time, 
\[\Delta X_t=\Delta \left(\sum_{j=1}^N\int_0^tI_{\{\alpha_{s-}=j\}}dX_s^{(j)}\right)=\sum_{j=1}^NI_{\{\alpha_{t-}=j\}}\Delta X_t^{(j)}\]
and since $I_{\{\alpha_{t-}=i\}} \cap I_{\{\alpha_{t-}=j\}}=0$ for $i \neq j$,
\begin{equation}\label{jump}\
\mu_X(dt,dx)=\sum_{j=1}^NI_{\{\alpha_{t-}=j\}}\mu^{(j)}(dt,dx)
\end{equation}
and 
\begin{equation}\label{comp}
\nu_X(dt,dx)=\sum_{j=1}^NI_{\{\alpha_{t-}=j\}}\nu^{(j)}(dt,dx)=\sum_{j=1}^NI_{\{\alpha_{t-}=j\}}\nu^{(j)}(dx)dt.
\end{equation}
Finally, 
$$A_t^{Q}=\sum_{j=1}^N \int_0^t \left[ a^{(j)}+c^{(j)}\beta_s^{(j)}+\right.
\left. \int_{\mathbb R^d}x\cdot (Y_s^{(j)}(x)-1)\nu^{(j)}(dx)\right]I_{\{\alpha_{s-}=j\}}ds.$$
\par  Thus, $X=(X_t)_{0\leq t\leq T}$ is a martingale w.r.t. $Q^X(\cdot|\alpha)$   if and only if $A_t^{Q}=0$ for all $t \ge 0$. We  take the derivative in the previous expression to obtain that
on the set $\{\alpha_{s-}=j\}$ we have  that
\[a^{(j)}+c^{(j)}\beta_t^{(j)}+\int_{\mathbb{R}^d}x\cdot(Y_t^{(j)}(x)-1)\nu^{(j)}(dx)=0.\]
As a conclusion, on the set $\{\alpha_{t-}=j\}$ the Girsanov parameters for the change of the measure $P^X(\cdot|\alpha)$  into $Q^X(\cdot|\alpha)$ , denoted $(\beta,Y)$, are equal to the Girsanov parameters $(\beta^{(j)},Y^{(j)})$ of an equivalent martingale measure for $X^{(j)}$. This proves \eqref{form4beta} and \eqref{form4y}, and we conclude using the Proposition 5. 
\end{proof}
\par 
For all $t\in[0,T]$ due to the relations \eqref{jump} and \eqref{comp}
\begin{equation}\label{mm}
m_t=\sum_{j=1}^Nm_t^{(j)}
\end{equation}
with 
\begin{equation}\label{m}
\hspace{-4cm} m_t^{(j)}=\int_0^t I_{\{\alpha_{s-}=j\}}\,^{\top}\!\beta^{(j)}\sqrt{(c^{(j)})}dW_s^{(j)}
\end{equation}
$$\hspace{4cm}+\int_0^t\int_{\mathbb R^d}I_{\{\alpha_{s-}=j\}}\,(Y^{(j)}(x)-1)(\mu^{(j)}-\nu^{(j)})(ds,dx).$$
Let us denote $Z^{(j)}_t=\mathcal E(m^{(j)}_t)$.
Now, since the processes $X^{(j)}$, $j=1,2,\cdots, N,$ are independent and cannot jump at the same time with probability 1, we conclude that
\begin{equation}\label{Z}
\begin{split}
Z_t=&\mathcal E(m)_t=\exp\{m_t-\frac{1}{2}<m^c>_t\prod_{0<s\le t}e^{-\Delta m_s}(1+\Delta m_s)\}\\
&\hspace{2cm}=\prod_{j=1}^N\mathcal E(m^{(j)}_t)=\prod_{j=1}^NZ_t^{(j)}.
\end{split}
\end{equation}
\begin{theorem}
	Suppose $\mathcal M^{(j)} \neq \emptyset$ for all $j=1,2,...,N$. Then for HARA utilities, the minimal martingale measure $\mathbb{Q}^*_T$ exists and its Radon-Nikodym derivative w.r.t. $\mathbb{P}$ is given by
	\[\mathbb{Z}_T^*=\xi_T^{(\alpha),*}\,\prod_{j=1}^NZ_T^{(j),*}\]
	where $Z_T^{(j),*}=\mathcal E(m^{(j),*})_t$ with $m^{(j),*}$ defined by  \eqref{m} with replacement of $(\beta^{(j)},Y^{(j)})$ by Girsanov parameters $(\beta^{(j),*},Y^{(j),*})$ of the minimal martingale measure and $\xi_T^{(\alpha),*}$ is the Radon-Nikodym density of an equivalent to $P^{\alpha}$ measure which minimise $f$-divergence. For logarithmic utility
$\xi_T^{(\alpha),*}=1$, for power utility it is given by the formula \eqref{power}, and for exponential utility it is defined via \eqref{exp}.
\end{theorem}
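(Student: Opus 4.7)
The plan is to combine the density factorisation of Proposition 6 with the conditional independence of the factors $Z^{(j)}_T$ given $\alpha$, so that the $f$-divergence minimisation separates into (i) per-component problems for each L\'evy process $X^{(j)}$, solved by the corresponding $f$-divergence minimal martingale measures (Proposition 1), and (ii) a one-dimensional variational problem for the density $\xi^{(\alpha)}_T$. By Proposition 6, every $\mathbb{Q}_T\in\mathcal M$ has density $\xi^{(\alpha)}_T\prod_{j=1}^N Z^{(j)}_T$; because $m^{(j)}$ carries the indicator $I_{\{\alpha_{s-}=j\}}$ and the $X^{(j)}$ are mutually independent, the factors $Z^{(j)}_T$ are independent conditionally on $\alpha$, each being a $P^{(j)}$-martingale with $E[Z^{(j)}_T\,|\,\alpha]=1$. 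I would condition on $\alpha$ throughout and then invoke Proposition \ref{HK}, noting that the Girsanov parameters of $Z^{(j)}$ coincide with $(\beta^{(j)},Y^{(j)})$ on $\{\alpha_{s-}=j\}$ and are trivial otherwise, so both the Hellinger and Kullback-Leibler processes of $Z^{(j)}$ linearise as $h^{(j)}(\gamma)L^{(j)}_T$ and $k^{(j)}L^{(j)}_T$ respectively, with $L^{(j)}_T=\int_0^T I_{\{\alpha_{s-}=j\}}\,ds$ and with $h^{(j)}$, $k^{(j)}$ depending only on $(\beta^{(j)},Y^{(j)})$.

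For the \emph{logarithmic} utility the $f$-divergence splits additively,
\[
f(\mathbb Q_T\,|\,\mathbb P_T)= -1-E_{\mathbb P}[\ln\xi^{(\alpha)}_T]-\sum_{j=1}^N E_{\mathbb P}[\ln Z^{(j)}_T],
\]
so Jensen's inequality (with $E[\xi^{(\alpha)}_T]=1$) forces $\xi^{(\alpha),*}_T=1$, while each summand is minimised by the entropy-minimal martingale measure for $X^{(j)}$. For the \emph{power} utility with $\gamma=p/(p-1)$, conditioning on $\alpha$ gives
\[
f(\mathbb Q_T\,|\,\mathbb P_T)= -\tfrac{p-1}{p}\,E_{\mathbb P}\!\left[(\xi^{(\alpha)}_T)^{\gamma}\exp\!\big(-\textstyle\sum_{j}h^{(j)}(\gamma)L^{(j)}_T\big)\right].
\]
Since $L^{(j)}_T\ge 0$ almost surely, optimising $(\beta^{(j)},Y^{(j)})$ under the martingale constraint of Proposition 6 is pointwise in $\alpha$ equivalent to optimising the per-state coefficient $h^{(j)}(\gamma)$, which is exactly the problem solved by the $f$-divergence minimal martingale measure for $X^{(j)}$ (Proposition 1). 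With those parameters frozen, the residual scalar problem $\min_{E[\xi]=1} E_{\mathbb P}[\xi^{\gamma}G_T(\alpha)]$ is a standard Lagrange-multiplier computation yielding \eqref{power}.

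The \emph{exponential} case proceeds analogously: using $E[Z^{(j)}_T\,|\,\alpha]=1$ and Proposition \ref{HK} one rewrites
\[
f(\mathbb Q_T\,|\,\mathbb P_T)= E_{\mathbb P}[\xi^{(\alpha)}_T K_T(\alpha)]+E_{\mathbb P}[\xi^{(\alpha)}_T\ln\xi^{(\alpha)}_T]
\]
with $K_T(\alpha)=\sum_j k^{(j)}L^{(j)}_T$; minimising each $k^{(j)}$ under the martingale constraint gives the Kullback-Leibler minimal measure for $X^{(j)}$, and exponential tilting produces $\xi^{(\alpha),*}_T\propto\exp(-K_T(\alpha))$, i.e.\ \eqref{exp}. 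The main obstacle is justifying the interchange between the minimisation over $(\beta^{(j)},Y^{(j)})$ and the expectation over $\alpha$: this works because the martingale constraint on $X$ under $\hat{\mathbb F}$ decouples state-by-state (Proposition 6), the coefficients $h^{(j)}(\gamma)$ and $k^{(j)}$ depend only on $(\beta^{(j)},Y^{(j)})$, the weights $L^{(j)}_T$ are non-negative, and Proposition 1 guarantees that the optimal per-state Girsanov parameters can be chosen constant in $(\omega,t)$, hence define genuine minimal martingale measures for each $X^{(j)}$.
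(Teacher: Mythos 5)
Your overall strategy is the same as the paper's: factor the density as $\xi^{(\alpha)}_T\prod_j Z^{(j)}_T$ via Proposition 6, argue that each Lévy factor should be replaced by the corresponding individual $f$-divergence minimal density, and then solve the remaining scalar problem for $\xi^{(\alpha)}_T$ (Jensen for the logarithm, Lagrange multipliers with the Hellinger and Kullback--Leibler representations for the power and exponential cases). Your treatment of $\xi^{(\alpha),*}_T$ is essentially identical to the paper's step 2. Where you genuinely diverge is in the first step. The paper does not condition on $\alpha$ globally; it decomposes $[0,T]$ along the jump times $\tau_k$ of $\alpha$, writes $\mathbb{Z}_T$ as $\xi^{(\alpha)}_T\prod_m \xi^{(j_m)}_{\tau_m}/\xi^{(j_m)}_{\tau_{m-1}}$ on $\{\tau_k\le T<\tau_{k+1}\}$, and replaces the excursion factors one at a time, each replacement being justified by the scaling invariance and time-horizon invariance of Proposition 1 together with the conditional identity in law $\xi^{(j_k)}_T/\xi^{(j_k)}_{\tau_k}\stackrel{d}{=}\xi^{(j_k)}_{T-\tau_k}$. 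Your route instead conditions on $\alpha$ once, uses conditional independence of the $Z^{(j)}_T$, and reduces everything to the monotone dependence on the per-state coefficients $h^{(j)}(\gamma)$ and $\kappa^{(j)}$ weighted by $L^{(j)}_T\ge 0$; this is cleaner and makes the sign analysis for both ranges of $p$ transparent.

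The one place where your argument is weaker than the paper's is the claimed linearisation $H^{(j)}_T(\gamma)=h^{(j)}(\gamma)L^{(j)}_T$ (and similarly for $K$). Proposition \ref{HK} and this factorised form require Girsanov parameters depending only on $(t,x)$, whereas a general competitor $\mathbb{Q}_T\in\mathcal M$ has predictable, $\omega$-dependent parameters $(\beta^{(j)}_t,Y^{(j)}_t)$, for which $E_{\mathbb{P}}[(Z^{(j)}_T)^\gamma\,|\,\alpha]$ is not an exponential of a deterministic multiple of $L^{(j)}_T$. So your reduction, as stated, only eliminates competitors with deterministic parameters; appealing to the fact that the optimal parameters "can be chosen constant" does not by itself exclude the random ones. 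To close this you need exactly the ingredient the paper uses: the minimality of $\mathbb{Q}^{(j),*}$ among \emph{all} equivalent martingale measures for $X^{(j)}$ together with scaling invariance (so that minimality survives multiplication by the conditionally independent constant formed by the other factors and $\xi^{(\alpha)}_T$), applied after conditioning on $\alpha$ and the remaining processes. With that step inserted, your proof is complete and arguably better organised than the excursion-by-excursion argument.
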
 
\begin{proof} 
\par 1) First of all we prove that for  any  density $\xi^{(\alpha)}$
$$E_{\mathbb{P}}\left [f(\mathbb{Z}_T)\right ]=E_{\mathbb{P}}\left [f(\xi^{(\alpha)}_T\prod_{j=1}^NZ_t^{(j)})\right ]\geq E_{\mathbb{P}}\left [f(\xi^{(\alpha)}_T\prod_{j=1}^NZ_t^{(j),*})\right ].$$
Since the Radon-Nikodym  density process $Z=(Z_t)_{0\leq t\leq T}$ of the conditional martingale measure $Q_T^X(\cdot|\alpha)$ w.r.t. $P_T^X(\cdot|\alpha )$ is
$Z_t= \mathcal{E}(m)_t$
where $m$ is defined by \eqref{m}, and $\sum_{k=0}^{\infty}I_{\{\tau_k\leq t<\tau_{k+1}\}}=1$
for each $t>0$ and $\tau_0=0$, we get
$$ m_T= \sum_{k=0}^{\infty}\int_{\tau_k\wedge T}^{\tau_{k+1}\wedge T}\left[^{\top}\!\beta_t^{(j_k)}dX_t^{(j_k),c}+\int_{\mathbb{R}^d}(Y_t^{(j_k)}(x) -1)(\mu^{(j_k)}-\nu ^{(j_k)})(dt,dx)\right]$$
where $(\beta^{(j_k)},Y^{(j_k)})$ are the Girsanov parameter of the mentioned change of the measure for the L\'{e}vy process $X^{(j_k)}$.  The last formula means that on the set 	$\{\tau_k\leq T<\tau_{k+1}\}$ with fixed $k$
$$ m_T= \sum_{m=0}^{k-1}\int_{\tau_m}^{\tau_{m+1}}[^{\top}\!\beta_t^{(j_m)}dX_t^{(j_m),c}+\int_{\mathbb{R}^d}(Y_t^{(j_m)}(x) -1)(\mu^{(j_m)}-\nu ^{(j_m)})(dt,dx)$$
$$+\int_{\tau_k}^T[^{\top}\!\beta_t^{(j_k)}dX_t^{(j_k),c}+\int_{\mathbb{R}^d}(Y_t^{(j_k)}(x) -1)(\mu^{(j_k)}-\nu ^{(j_k)})(dt,dx)$$
and  the Radon-Nikodym density $\mathbb{Z}_T$ of the measure $\mathbb{Q}_T$ w.r.t. the measure $\mathbb{P}_T$
verifies
$$\mathbb{Z}_T=\xi^{(\alpha)}_T\,\left(\prod_{m=1}^{k-1}\frac{\xi^{(j_m)}_{\tau_m}}{\xi^{(j_m)}_{\tau_{m-1}}}\right)\,\frac{\xi^{(j_k)}_T}{\xi^{(j_k)}_{\tau_k}}$$
where $(\xi^{(j_m)})_{0\leq m\leq k},$ are Radon-Nikodym densities of the mentioned change of the measure for $(X^{(j_m)})_{0\leq m\leq k}$.
We denote  the value of $\mathbb{Z}_T$ on the set 	$\{\tau_k\leq T<\tau_{k+1}\}$  by $Z_T(k)$.
Then for any $f$-divergence corresponding to HARA utility
$$E_{\mathbb{P}}\left [f(\mathbb{Z}_T)\right ]=\sum_{k=0}^{\infty}E_{\mathbb{P}}\left [f(\mathbb{Z}_T)\,I_{\{\tau_k\leq T<\tau_{k+1}\}}\right ]=\sum_{k=0}^{\infty}E_{\mathbb{P}}\left [I_{\{\tau_k\leq T<\tau_{k+1}\}}\,E_{\mathbb{P}}\left [f(Z_T(k))\,|\alpha\right ]\right ]$$
When we take the  conditional expectation w.r.t. $\alpha$, the random variables $\xi^{(\alpha)}_T$ and $(\tau_m, j_m)$, $m\geq 1$, become to be fixed, and the dependence on $\alpha$ will also disappear in all couple  $(\beta^{(j_m)},Y^{(j_m)})$ of the Girsanov parameters. Then we take the conditional expectation w.r.t. the processes $X^{(j_1)},\cdots, X^{(j_{k-1})}$:
$$E_{\mathbb{P}}\left [f(Z_T(k))\,|\alpha\right ]=E_{\mathbb{P}} \left [E_{\mathbb{P}}\left [f(Z_T(k))\,|\alpha,X^{(j_1)},\cdots, X^{(j_{k-1})}\right ]\right] $$
Using the scaling property of the $f$-divergence for L\'{e}vy processes together with the fact that conditionally to $\alpha , X^{(j_1)},\cdots, X^{(j_{k-1})} $
$$\frac{\xi^{(j_k)}_T}{\xi^{(j_k)}_{\tau_k}}\stackrel{d}{=}\xi^{(j_k)}_{T-\tau_k}$$
we can replace $\xi^{(j_k)}$ by  the corresponding densities for $f$-divergence minimal martingale measures $\xi^{(j_k),*}$ and this procedure will decrease $f$-divergence. Then, we take the conditional expectation w.r.t. the processes $(X^{(j_1)},\cdots , X^{(j_{m-1})},X^{(j_{m+1})},\cdots X^{(j_k)})$ and use the scaling property of $f$-divergence and the identity in law saying that conditionally to $\alpha , X^{(j_1)},\cdots , X^{(j_{m-1})},X^{(j_{m+1})},\cdots X^{(j_k)}$
$$\frac{\xi^{(j_m)}_{\tau_{m}}}{\xi^{(j_m)}_{\tau_{m-1}}}\stackrel{d}{=}\xi^{(j_m)}_{\tau_{m}-\tau_{m-1}},$$
to replace $\xi^{(j_m)}$ by $\xi^{(j_m,*)}$.
Finally, 
$$E_{\mathbb{P}}[f(\mathbb{Z}_T)]\geq \sum_{k=0}^{\infty}\,E_{\mathbb{P}}\left[I_{\{\tau_k\leq t<\tau_{k+1}\}}\,f\left(\xi^{(\alpha)}_T\,\left(\prod_{m=1}^{k-1}\frac{\xi^{(j_m),*}_{\tau_m}}{\xi^{(j_m),*}_{\tau_{m-1}}}\right)\,\frac{\xi^{(j_k),*}_T}{\xi^{(j_k),*}_{\tau_k}}\right)\right]$$
$$=E_{\mathbb{P}} \left [f(\xi^{(\alpha)}_T \,\prod_{j=1}^NZ_T^{(j),*})\right ] $$
\par 2) Now we will obtain the expression for $\xi^{(\alpha),*}$ minimizing each $f$-divergence. For logarithmic utility $f(x)= -\ln(x)-1$ we get  that
$$E_{\mathbb{P}}[f(\mathbb{Z}_T^*)]= -E_{\mathbb{P}}[\ln(\xi^{(\alpha)}_T)] -\sum_{j=1}^N\,E_{\mathbb{P}} 
[\ln ( Z^{(j),*}_T)]-1$$
By Jensen inequality we deduce that
$$-E_{\mathbb{P}}[\ln(\xi^{(\alpha)}_T)]\geq -\ln (E_{\mathbb{P}}[\xi^{(\alpha)}_T])=0,$$
so  the minimum is achieved for $\xi^{(\alpha),*}_T=1$.
\par 3) For power utility $u(x) = \frac{x^p}{p}$ we put $\gamma=\frac{p}{p-1}$. Then 
$f(x) = -\frac{1}{\gamma}x^{\gamma}$. We write
$$E_{\mathbb{P}}\left [f(\mathbb{Z}_T)\right ]\geq -\frac{1}{\gamma}E_{\mathbb{P}}\left[(\xi^{(\alpha)}_T)^{\gamma}\,\prod_{j=1}^N(Z_t^{(j),*})^{\gamma}\right]=  -\frac{1}{\gamma}E_{\mathbb{P}}\left[(\xi^{(\alpha)}_T)^{\gamma}\,\prod_{j=1}^N E_{\mathbb{P}}\left[((Z_t^{(j),*})^{\gamma}\,\bigg|\,\alpha\right]\right]$$
since conditionally to $\alpha$ the processes $Z^{(j),*}, j=1,\cdots , N,$ are independent.
Now, let us denote by $Q_T^{(j),*}$ the $f$-divergence minimal measure of the individual L\'{e}vy process $X^{(j)}$, by $P_T^{(j)}$ its law under $\mathbb{P}$ and by $H^{(j),*}_t(\gamma)$ the value of the 
corresponding Hellinger process at $t$. We recall that due to the homogeneity of the L\'evy process $X^{(j)}$ we have
$$H_t^{(j),*}{(\gamma)}= t\left[\frac{\gamma (1-\gamma)}{2}\langle c^{(j)}\beta^{(j),*},\beta^{(j),*}\rangle-\int_{\mathbb{R}^d}((Y^{(j),*}(x))^{\gamma}-\gamma Y^{(j),*}(x)-1+\gamma)\nu^{(j)}(dx)\right].$$
We denote the derivative of the Hellinger process w.r.t. $t$ by $h^{(j),*}{(\gamma)}$, i.e.
$$h^{(j),*}{(\gamma)}=\frac{\gamma (1-\gamma)}{2}\langle c^{(j)}\beta^{(j),*},\beta^{(j),*}\rangle-\int_{\mathbb{R}^d}((Y^{(j),*}(x))^{\gamma}-\gamma Y^{(j),*}(x)-1+\gamma)\nu^{(j)}(dx)$$
and we put
\begin{equation}\label{talpha}
T_j^{(\alpha)}= \int_0^T  I_{\{\alpha_{s-}=j\}}ds
\end{equation}
Using Proposition 4 we find that 
\begin{equation}\label{hellinger}
E_{\mathbb{P}}[(Z_t^{(j),*})^{\gamma}\,|\,\alpha]= \exp \left(-\int_0^T  I_{\{\alpha_{s-}=j\}}dH_s^{(j),*}(\gamma )\right)=\exp \left(-T_j^{(\alpha)}\,h^{(j),*}(\gamma )\right)
\end{equation}
and that
$$E_{\mathbb{P}}[f(\mathbb{Z}_T^*)]=-\frac{1}{\gamma}E_{\mathbb{P}}\left[(\xi^{(\alpha)}_T)^{\gamma}\,\exp \left(-  T_j^{(\alpha)}\,h^{(j),*}(\gamma )\right)\right]$$
Finally, the Lagrange method for the minimisation of the expectation under the restriction that $E_{\mathbb{P}}(\xi^{(\alpha)}_T)=1$, performed in $\omega$ by $\omega$ way, gives that
\begin{equation}\label{power}
\xi^{(\alpha),*}_T= \frac{\exp\left(\frac{1}{\gamma-1}\sum_{j=1}^N T^{(\alpha)}_j\,h^{(j),*}(\gamma )\right)}{E_{\mathbb{P}}\left[\exp\left(\frac{1}{\gamma-1}\sum_{j=1}^N T^{(\alpha)}_j\,h^{(j),*}(\gamma )\right)\right] }
\end{equation}
\par 4) For exponential utility $u(x)= 1-\exp(-x)$ with $f(x) = 1-x+x\ln(x)$ we write
$$E_{\mathbb{P}} \left [f(\xi_T^{(\alpha)}\,\prod_{j=1}^NZ_T^{(j),*})\right ]=E_{\mathbb{P}}\left [\xi_T^{(\alpha)}\ln(\xi_T^{(\alpha)})\right]+E_{\mathbb{P}}\left [\xi_T^{(\alpha)}\sum_{j=1}^N E_{\mathbb{P}}\left [Z_T^{(j),*}\ln(Z_T^{(j),*})|\alpha\right ]\right ] $$
since $E_{\mathbb{P}}\left[Z_T^{(j),*}\,|\,\alpha \right]=1$  and since the processes $Z^{(j),*}$,  conditionally to $\alpha$, are independent for $j=1,2,\cdots n$. We apply now Proposition 4 to deduce that
$$ E_{\mathbb{P}}\left[Z_T^{(j),*}\ln(Z_T^{(j),*})\,|\,\alpha\right]= T_j^{(\alpha)}\,K_T^{(j),*}$$
where $K_T^{(j),*}$ is the value of the Kulback-Leibler process at $T$ which corresponds to the L\'{e}vy process $X^{(j)}$ and defined as
$$K_T^{(j),*}= T\left[ \frac{1}{2}<c^{(j)}\beta^{(j),*},\beta^{(j),*}>+\int_{\mathbb{R}^d}[Y^{(j),*}(x)\ln(Y^{(j),*}(x) )-Y^{(j),*}(x)+1]\nu^{(j)}(dx)\right]$$
To simplify the notation we denote the derivative in $t$ of this Kulback-Leibler  process by $\kappa^{(j),*}$, i.e.
$$\kappa^{(j),*}= \frac{1}{2}<c^{(j)}\beta^{(j),*},\beta^{(j),*}>+\int_{\mathbb{R}^d}[Y^{(j),*}(x)\ln(Y^{(j),*}(x) )-Y^{(j),*}(x)+1]\nu^{(j)}(dx)$$
So, finally we have to minimize the expression
$$E_{\mathbb{P}}\left [\xi_T^{(\alpha)}\ln(\xi_T^{(\alpha)})\right ]+ E_{\mathbb{P}}\left [\xi_T^{(\alpha)}\sum_{j=1}^NT_j^{(\alpha)}\,\kappa^{(j),*}\right ]$$ under the constraint $E_{\mathbb{P}}(\xi_T^{(\alpha)})=1$. Again, using Lagrange method of the minimisation of the integrals and  performing the minimisation in $\omega$ by $\omega$ way, we get that
\begin{equation}\label{exp}
\xi^{(\alpha),*}_T= \frac{\exp\left(-\sum_{j=1}^n T^{(\alpha)}_j\,\kappa^{(j),*}\right)}{E_{\mathbb{P}}\left[\exp\left(-\sum_{j=1}^N T^{(\alpha)}_j\,\kappa^{(j),*}\right)\right] }
\end{equation}
this ends the proof of this theorem.
\end{proof}	
\section{Utility maximising strategies for L\'evy switching models}
\par In this section we will find the optimal strategies for the utility maximisation. These strategies will be automatically adapted to the progressive filtration, but for the technical reason we will search  them in the initially enlarged filtration as it was mentioned in the introduction.
\par To  find the optimal strategy we will use  Proposition  
3 and also the representation of $E_{\mathbb{Q}} \left [f'(\lambda\mathbb{Z}_T^*)\,\big|\,\hat{\mathcal{F}}_t\right]$  as a sum of the stochastic integrals using the Ito formula.
For that we  change  the measure in the conditional expectation~:
$$E_{\mathbb{Q}} \left [f'(\lambda\mathbb{Z}_T^*)\,\bigg|\,\hat{\mathcal{F}}_t\right]=E_{\mathbb{P}} \left [\frac{\mathbb{Z}_T^*}{\mathbb{Z}_t^*}f'(\lambda\mathbb{Z}_T^*)\,\bigg|\,\hat{\mathcal{F}}_t\right ]$$
Since the process $X$, conditionally to the process $\alpha$, has independent increments, 
$\frac{\mathbb{Z}_T^*}{\mathbb{Z}_t^*}$ and $\mathbb{Z}_t^*$ are also conditionally to the process $\alpha$ independent for each $t\in[0,T]$. As a conclusion,
$$E_{\mathbb{Q}} \left [f'(\lambda\mathbb{Z}_T^*)\,\bigg|\,\hat{\mathcal{F}}_t\right ]=E_{\mathbb{P}} \left [\frac{\mathbb{Z}_T^*}{\mathbb{Z}_t^*}f'(\lambda\, x\,\frac{\mathbb{Z}_T^*}{\mathbb{Z}_t^*})\,\bigg|\,\hat{\mathcal{F}}_t\right]\bigg|_{x=\mathbb{Z}_t^*}$$
\par Let us introduce the shifted process $\alpha ^{t}$ as
$$\alpha^t(s) = \alpha(s+t)$$
and let also put
\begin{equation}\label{likelihood}
\mathbb{Z}_t^*= E_{\mathbb{P}}\left [\mathbb{Z}^*_T\,\big|\,\hat{\mathcal{F}}_t\right]
\end{equation}
and 
\begin{equation}\label{za}
Z_t(\alpha)=\frac{\mathbb{Z}_t^*}{\mathbb{Z}_0^*}
\end{equation}
Then,  since the processes $X^{(1)},\cdots X^{(N)}$ and $\alpha$ are independent, and the processes $X^{(1)},\cdots X^{(N)}$ are homogeneous in time,
$$\frac{\mathbb{Z}_T^*}{\mathbb{Z}_t^*}\stackrel{d}{=} Z_{T-t}(\alpha^t)$$
Finally,
$$E_{\mathbb{Q}} \left [f'(\lambda\mathbb{Z}_T^*)\,\big|\,\hat{\mathcal{F}}_t\right]=E_{\mathbb{P}} \left[Z_{T-t}(\alpha^t)\,f'(\lambda\, x\,Z_{T-t}(\alpha^t))\,\big|\,\hat{\mathcal{F}}_t\right]\bigg|_{x=\mathbb{Z}_t^*}$$
Now, we will give the expression for the optimal strategy for each HARA utility and also the corresponding maximal expected utility.\\
To avoid the complicated notations we will omit $*$ for the Girsanov parameters $(
\beta, Y)$ corresponding to the $f$-divergence minimal martingale measure. So, for the rest of the paper we put for all $t \ge 0$ and $x \in \mathbb R^d$:
\begin{equation}
\label{form16beta}
\beta_t=\sum_{j=1}^N\beta_t^{(j),*}I_{\{\alpha_{t-}=j\}}
\end{equation} 
\begin{equation}
\label{form16y}
Y_t(x)=\sum_{j=1}^N Y_t^{(j),*}(x)I_{\{\alpha_{t-}=j\}}
\end{equation}
\subsection{When the utility is logarithmic}
Let us suppose that the utility $u(x)=\ln(x)$. Then the corresponding $f$ is also logarithm up to a constant, i.e. $f(x)=-\ln (x)-1$ with $f'(x)=-\frac{1}{x}$.
\begin{proposition}
Suppose that  for $1 \le j \le N$ the matrices $c^{(j)}$  are  invertible and $E_{\mathbb{P}}|\ln\mathbb{Z}^*_T|<\infty$. Then for the initial capital $x_0 >0$, there exists an optimal strategy and for $1 \le k \le d $ and $0\le t\le T,$ it is given by:
$$\hat \varphi_t^{(k)}=- \frac{x_0\beta^{(k)}_t}{Z_{t-}(\alpha)S_{t-}^{(k)}},$$
where $\beta$ and $Z(\alpha)$ are defined by (\ref{form16beta}) and (\ref{za}) respectively. This strategy is 
progressively adapted and the corresponding maximal expected utility is
$$U^{log}_T(x_0)=\ln(x_0)-E_{\mathbb{P}}\left [\ln\mathbb{Z}^*_T\right]=\ln(x_0)-E_{\mathbb{P}}\left [\ln(Z_T(\alpha)\right].$$
\end{proposition}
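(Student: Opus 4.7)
The plan is to specialize Proposition 3 to $f(x)=-\ln x -1$, read off $\lambda_0$ from the normalization, obtain the value function directly, and then identify the strategy by an Itô decomposition of the optimal wealth process $V_t=x_0/\mathbb{Z}^*_t$.

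Specifically, I would first compute $\lambda_0$. From $f'(x)=-1/x$, equation (\ref{initial}) becomes $E_{\mathbb{P}}[1/\lambda_0]=x_0$, hence $\lambda_0=1/x_0$ and $-f'(\lambda_0\mathbb{Z}^*_T)=x_0/\mathbb{Z}^*_T$. The integrability hypotheses of Proposition 3 reduce to the standing assumption $E_{\mathbb{P}}|\ln\mathbb{Z}^*_T|<\infty$ and the trivial bound $E_{\mathbb{P}}[x_0]=x_0<\infty$. The value function formula of Proposition 3 then yields $U^{\log}_T(x_0)=E_{\mathbb{P}}[\ln(x_0/\mathbb{Z}^*_T)]=\ln x_0 - E_{\mathbb{P}}[\ln\mathbb{Z}^*_T]$. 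By Theorem 1 in the logarithmic case we have $\xi^{(\alpha),*}_T=1$, so $\mathbb{Z}^*_T=\prod_{j=1}^N Z^{(j),*}_T$; a short computation conditioning on $\alpha$ and using that each $Z^{(j),*}$ is a true $\mathbb{P}$-martingale starting at $1$ shows $\mathbb{Z}^*_0=1$, so definition (\ref{za}) yields $\mathbb{Z}^*_T=Z_T(\alpha)$, giving the announced formula for $U^{\log}_T(x_0)$.

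To identify the optimal strategy, I would apply Itô's formula to $1/\mathbb{Z}^*_t$ using the representation $\mathbb{Z}^*_t=\mathcal{E}(m^*)_t$ with $m^*$ given by (\ref{PRP_m}) and Girsanov parameters (\ref{form16beta})--(\ref{form16y}), in order to decompose $dV_t$ into stochastic integrals against $dX^c$ and $(\mu_X-\nu_X)$. The continuous martingale part of $V$ has integrand $-V_{t-}\,^{\top}\!\beta_t$ against $dX^c$. Converting integrals against $X$ into integrals against $S$ via $dS^{(k)}_t=S^{(k)}_{t-}d\bar{X}^{(k)}_t$, the coefficient of $dS^{(k)}_t$ becomes $-V_{t-}\beta^{(k)}_t/S^{(k)}_{t-}=-x_0\beta^{(k)}_t/(Z_{t-}(\alpha)S^{(k)}_{t-})$, which is precisely the announced expression for $\hat\varphi^{(k)}_t$. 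Progressive adaptedness of $\hat\varphi$ is automatic from formula (\ref{form16beta}) since $\beta_t$ depends only on $\alpha_{t-}$ and deterministic individual Girsanov parameters.

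The main obstacle is the careful bookkeeping of the jump contributions: one has to check that after rewriting the jumps of $V$ through $Y$ and $(\mu_X-\nu_X)$ and translating them to integrals against $dS^{(k)}$, they combine with the continuous piece so that the single coefficient $\hat\varphi^{(k)}_t$ above does describe the complete integrand. The invertibility assumption on each $c^{(j)}$ is used to ensure uniqueness of the Girsanov parameter $\beta^{(j),*}$ of the minimal martingale measure (cf.\ Proposition 6) and guarantees that the $\beta$-term alone captures the continuous component. Finally, to invoke Proposition 3 one needs $\int\hat\varphi\cdot dS$ to be a $\mathbb{Q}^*$-martingale, which follows from the martingale property of $V=x_0/\mathbb{Z}^*$ under $\mathbb{Q}^*$, since $E_{\mathbb{Q}^*}[V_T]=E_{\mathbb{P}}[\mathbb{Z}^*_T\,V_T]=x_0<\infty$ and $V$ is positive.
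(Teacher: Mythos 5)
Your proposal is correct and follows essentially the same route as the paper: compute $\lambda_0=1/x_0$ from (\ref{initial}), read off the value $\ln x_0-E_{\mathbb{P}}[\ln\mathbb{Z}^*_T]$ from Proposition 3, and identify $\hat\varphi$ by applying It\^o's formula to $1/Z_t(\alpha)$ and matching the continuous part against $\sum_k\int\hat\varphi^{(k)}dS^{(k)}$. The ``jump bookkeeping'' obstacle you flag is resolved in the paper exactly as you suggest: the difference of the two continuous martingale integrands equals a purely discontinuous martingale, hence is orthogonal to itself and has vanishing quadratic variation, and the strict positive-definiteness of $c_t$ (from the invertibility of the $c^{(j)}$) then forces that integrand to vanish pointwise, which pins down $\hat\varphi^{(k)}_t=-x_0\beta^{(k)}_t/(Z_{t-}(\alpha)S^{(k)}_{t-})$.
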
 
\begin{proof}
Simple calculus shows that in considered case
$$-E_{\mathbb{P}} \left [Z_{T-t}(\alpha^t)\,f'(\lambda_0\, x\,Z_{T-t}(\alpha^t))\,\big|\,\alpha\right]=\frac{1}{\lambda_0x}$$
and from the condition (\ref{initial}) we get that $\lambda_0=\frac{1}{x_0}$ and, hence,
$$-E_{\mathbb Q}\left (f'(\lambda_0 \mathbb Z^*_T)|\hat {\mathcal F}_t\right )=\frac{x_0}{\mathbb Z^*_t}=\frac{x_0}{ Z_t(\alpha)}$$
since $\mathbb{Z}^{*}_0=\xi^{(\alpha),*}_T=1$. We have the expression
$$Z_t(\alpha)=\mathcal E(m)_t=\exp\left(m_t-\frac{1}{2}\langle m^c\rangle_t+\int_0^t\int_{\mathbb R^d} \left (\ln  (Y_s(x))-Y_s(x)+1\right )\mu_X(ds,dx)\right),$$
where $m$ is a $(\mathbb P,\hat{\mathbb{F}})$-martingale given by \eqref{mm}. From the mentioned formula for $m$ we get that
\begin{equation*}
\begin{split}
Z_t(\alpha)=\exp\bigg (&\int_0^t\,{^\top}\beta_sd X^{c,\mathbb Q}_{s} +\int_0^t\int_{\mathbb R^d}(Y_s(x)-1)(\mu_X-\nu_X^{\mathbb Q})(dt,dx)+\int_0^t\frac{1}{2}\langle c_s\beta_s,\beta_s\rangle ds\\
&\hspace{-0.5cm}+\int_0^t\int_{\mathbb R^d} (Y_s(x)-1)^2\nu_X(ds,dx)+\int_0^t\int_{\mathbb R^d} (\ln Y_s(x)-Y_s(x)+1)\mu_X(ds,dx)\bigg ),
\end{split}
\end{equation*}
where $X^{c,\mathbb Q}$ is the continuous martingale part of $X$ under $\mathbb Q$, $c$ is the density of its predictable variation and $\nu_X^{\mathbb Q}$ is the compensator of the jump measure of $X$ under~$\mathbb Q$.
\par Let us define a $\mathbb Q$-martingale $M=(M_t)_{t\ge 0}$ with 
$$M_t=\int_0^t\,{^\top}\beta_sd X^{c,\mathbb Q}_{s} +\int_0^t\int_{\mathbb R^d}\ln(Y_s(x))(\mu_X-\nu_X^{\mathbb Q})(dt,dx) $$
and also a predictable process $B=(B_t)_{t \ge 0}$ with
$$B_t=\int_0^t\frac{1}{2}\langle c_s\beta_s,\beta_s\rangle ds+\int_0^t\int_{\mathbb R^d} (Y_s(x)-1)^2\nu_X(ds,dx)$$
$$\hspace{6cm}+\int_0^t\int_{\mathbb R^d} (\ln Y_s(x)-Y_s(x)+1)\nu^{\mathbb Q}_X(ds,dx).$$
Then using the compensating formula and doing the simplifications for discontinuous martingale part we see that
\begin{equation}\label{form4z}Z_t=\exp(M_t+B_t)
\end{equation} 
Let $\tau_n=\inf\{t\geq 0 \,|\, Z_t(\alpha)\leq\frac{1}{n}\}$ with $n\geq 1$ and $\inf\{\emptyset \}=+\infty $. By It\^o formula applied to the function $g(x)= \exp(-x)$ we get
\begin{equation*}
\begin{split}
\frac{1}{Z_{t\wedge\tau_n}(\alpha)}=1&-\int_0^{t\wedge\tau_n}\frac{1}{Z_{s-}(\alpha)}(dM_s+dB_s)+\frac{1}{2}\int_0^{t\wedge\tau_n} \frac{1}{Z_{s-}(\alpha)}d\langle M^c\rangle_s\\
&\hspace{2cm}+\int_0^{t\wedge\tau_n} \int_{\mathbb R^d} \frac{1}{Z_{s-}(\alpha)}\left(Y^{-1}_s(x)-1+\ln(Y_s(x)\right)\mu_X(ds,dx).
\end{split}
\end{equation*}
Since $\tau_n\rightarrow+\infty$, we deduce after limit passage the similar expression for $\frac{1}{Z_{t}(\alpha)}$ with $t\wedge\tau_n$ replaced by $t$.
\par Using again the compensating formula and taking in account that $(Z_t(\alpha))^{-1}_{t \ge 0}$ is a $\mathbb Q$-martingale, we get that its drift part
$$
\hspace{-6cm}-\int_0^t\frac{1}{Z_{s-}(\alpha)}dB_s+\frac{1}{2}\int_0^t \frac{1}{Z_{s-}(\alpha)}d\langle M^c\rangle_s+$$
$$\hspace{5cm}\int_0^t \int_{\mathbb R^d} \frac{1}{Z_{s-}(\alpha)}\left(Y^{-1}_s(x)-1+\ln(Y_s(x)\right)\nu^{\mathbb Q}_X(ds,dx)=0.$$
According to the Proposition \ref{GR} we obtain then that for $0 \le t \le T$,
\begin{equation*}
\begin{split}
\frac{x_0}{Z_t(\alpha)}=&x_0-\int_0^t \frac{x_0}{Z_{s-}(\alpha)}{^\top}\beta_sd X^{c,\mathbb Q}_{s}+\int_0^t \int_{\mathbb R^d} \frac{x_0}{Z_{s-}(\alpha)}(Y^{-1}_s(x)-1)(\mu_X-\nu_X^{\mathbb Q})(ds,dx)\\
&\hspace{7cm}=x_0 + \sum_{k=1}^d\int_0^t \hat \varphi_s^{(k)}dS_s^{(k)}.
\end{split}
\end{equation*}
But $dS_s^{(k)}=S^{(k)}_{s-}dX^{(k)}_s$ and the last equality implies that 
$$\hspace{-5cm}\sum_{k=1}^d\int_0^t\left [ \frac{x_0}{Z_{s-}(\alpha)}\beta_s^{(k)}-\hat \varphi_s^{(k)}S_{s-}^{(k)}\right ]dX_s^{(k),c,\mathbb Q}$$
$$\hspace{3cm}=\int_0^t \int_{\mathbb R^d} \left(\frac{x_0}{Z_{s-}(\alpha)}(Y^{-1}_s(x)-1)+\sum_{k=1}^d\hat \varphi_s^{(k)}S_{s-}^{(k)}\right)(\mu_X-\nu_X^{\mathbb Q})(ds,dx).$$
In the left-hand side, we have a continuous martingale and in the right-hand side a purely discontinuous martingale, which is orthogonal to the first one. Hence, for $0 \le t \le T$,
the quadratic variation of a continuous martingale 
$$\langle \,\sum_{k=1}^d\int_0^t\left [ \frac{x_0}{Z_{s-}(\alpha)}\beta_s^{(k)}-\hat \varphi_s^{(k)}S_{s-}^{(k)}\right ]dX_s^{(k),c,\mathbb Q}\,\rangle_t=0$$
Therefore, for $0 \le t \le T$,
$$\int_0^t\sum_{k=1}^d\sum_{j=1}^d\left(\frac{x_0}{Z_{s-}(\alpha)}\beta_s^{(k)}-\hat \varphi_s^{(k)}S_{s-}^{(k)}\right) \left(\frac{x_0}{Z_{s-}(\alpha)}\beta_s^{(l)}-\hat \varphi_s^{(l)}S_{s-}^{(l)}\right)c^{(k,l)}_{s}ds=0,$$
where $c^{(k,l)}_{s}$ are the elements of the matrix $c$ defined in \eqref{form4c}.
Then, for $0\leq t\leq T$, the quadratic form corresponding to the strictly positive matrix $c$
$$\sum_{k=1}^d\sum_{j=1}^d\left (\frac{x_0}{Z_{t-}(\alpha)}\beta_t^{(k)}-\hat \varphi_t^{(k)}S_{t-}^{(k)}\right ) \left (\frac{x_0}{Z_{t-}(\alpha)}\beta_t^{(l)}-\hat \varphi_t^{(l)}S_{t-}^{(l)}\right )c^{(k,l)}_{t}=0,$$
which can  happen only at zero. Then our result for the optimal strategy follows.
\par Using again Proposition \ref{GR} and doing simple calculus, we get the formula for the optimal expected utility.
\end{proof}
\subsection{When the utility is power}
When the utility is power $u(x)=\frac{x^p}{p}$, the dual function $f$ is also power, i.e. $f(x)=-\frac{1}{\gamma}x^{\gamma}$ and $f'(x)=-x^{\gamma-1}$ with $\gamma=\frac{p}{p-1}$.
\begin{proposition}
Suppose that $E_{\mathbb P}[(Z^*_T)^{\gamma}] < \infty$ and that the matrices $c^{(j)},1\le j\le N,$ are invertible. Then for initial capital $x_0 >0$, there exists an optimal strategy and for $1 \le k \le d$ and $0 \le t \le T$, it is given by
$$\hat \varphi_t^{(k)}=\frac{x_0(\gamma-1)\,Z_{t-}^{\gamma-1}(\alpha)\,\beta_{t}^{(k)}\,\exp\left(-\sum_{j=1}^N h^{(j),*}(\gamma)\int_0^t I_{\{\alpha_{s-}=j\}}ds\right)}{S_{t-}^{(k)}}$$
with $\beta$ and $Z_t(\alpha)$  defined by (\ref{form16beta}) and (\ref{za}).
This strategy is 
progressively adapted and the corresponding maximal expected utility is
$$U^{pow}_T(x_0)= \frac{x_0^p}{p}\, \left[E_{\mathbb{P}}\exp\left(\frac{1}{\gamma -1}\sum_{j=1}^N h^{(j),*}(\gamma)T^{(\alpha)}_j\right)\right]^{\gamma}\,E_{\mathbb{P}}\exp\left(-\sum_{j=1}^N h^{(j),*}(\gamma)T^{(\alpha)}_j\right) $$
where $T^{(\alpha)}_j$ is defined by \eqref{talpha}.
\end{proposition}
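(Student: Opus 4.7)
The plan is to follow the three-step strategy used for the logarithmic case in Proposition~7: first identify the Lagrange multiplier $\lambda_0$, then express the optimal wealth $V_t := x_0 + \sum_k\int_0^t \hat\varphi^{(k)}_s\,dS^{(k)}_s$ as a closed-form functional of $\mathbb{Z}_t^*$ and $\alpha$, and finally apply It\^o's formula to identify $\hat\varphi^{(k)}$ and compute $U^{pow}_T(x_0)$.

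For the first step, with $f'(y)=-y^{\gamma-1}$ the initial condition \eqref{initial} of Proposition~3 becomes $x_0=\lambda_0^{\gamma-1}\,E_{\mathbb P}[(\mathbb Z_T^*)^\gamma]$. I evaluate the expectation by first conditioning on $\alpha$, using the factorisation of $\mathbb Z_T^*$ obtained in Theorem~1, the conditional independence of the $Z^{(j),*}$ given $\alpha$, and the Hellinger identity \eqref{hellinger}. Substituting the explicit form \eqref{power} of $\xi_T^{(\alpha),*}$ then yields $\lambda_0$ in closed form; the finiteness assumption $E_{\mathbb P}[(\mathbb Z_T^*)^\gamma]<\infty$ guarantees that all moments arising in this step are finite and that Proposition~3 applies.

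For the second step, I use the conditional-expectation representation via the shifted process $\alpha^t$ recalled just before the statement. For power utility a direct calculation gives
$$V_t \;=\; \lambda_0^{\gamma-1}\,(\mathbb Z_t^*)^{\gamma-1}\,E_{\mathbb P}[(Z_{T-t}(\alpha^t))^\gamma\mid \alpha].$$
Writing $\mathbb Z_t^*=\xi_T^{(\alpha),*}\,Z_t(\alpha)$ (obtained by conditioning the Theorem~1 factorisation on $\hat{\mathcal F}_t$ and using that every $Z^{(j),*}$ is a conditional $(\mathbb P,\hat{\mathbb F})$-martingale of unit mean given $\alpha$), applying \eqref{hellinger} to $\alpha^t$ on $[0,T-t]$, and inserting the formulas for $\xi_T^{(\alpha),*}$ and $\lambda_0$ from step one, the factors involving $\xi_T^{(\alpha),*}$ and the Hellinger exponential cancel, leaving $V_t$ as a product of $x_0$, the random factor $Z_t(\alpha)^{\gamma-1}$, and a deterministic $\alpha$-dependent exponential of $\int_0^t I_{\{\alpha_{s-}=j\}}ds$ with coefficients involving $h^{(j),*}(\gamma)$, matching the structure announced in the proposition.

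For the third step, I apply It\^o's formula to this product expression for $V_t$, using the semimartingale decomposition $Z_t(\alpha)=\exp(M_t+B_t)$ from \eqref{form4z}. The expansion yields a continuous-martingale component, a compensated jump integral, and a drift; the drift vanishes as an algebraic identity involving $\beta^{(j),*}$, $Y^{(j),*}$ and the definition of $h^{(j),*}(\gamma)$, consistently with $V$ being a $\mathbb Q$-martingale by Proposition~3. Matching the continuous-martingale part against $\sum_k \hat\varphi^{(k)}_t S^{(k)}_{t-}\,dX^{(k),c,\mathbb Q}_t$ and using the invertibility of the block-diagonal matrix $c_t$ from \eqref{form4c} --- exactly as in the argument for Proposition~7 --- uniquely identifies $\hat\varphi^{(k)}_t$ with the claimed formula. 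Finally, Proposition~3 gives $U^{pow}_T(x_0)=E_{\mathbb P}[u((\lambda_0\mathbb Z_T^*)^{\gamma-1})]=p^{-1}\lambda_0^\gamma\,E_{\mathbb P}[(\mathbb Z_T^*)^\gamma]$, which simplifies to the announced expression after substitution of the step-one values. The main technical hurdle is the It\^o bookkeeping in step three: the non-linearity $z\mapsto z^{\gamma-1}$ produces both a quadratic-variation correction and a jump sum that must combine cleanly with $(\gamma-1)B_t$ and the deterministic drift coming from the auxiliary exponential, so that the $\mathbb Q$-drift vanishes and only the $X^{(k),c,\mathbb Q}$ and jump martingale pieces remain to be matched.
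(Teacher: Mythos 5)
Your proposal is correct and follows essentially the same route as the paper: the conditional-expectation representation via the shifted process $\alpha^t$, the Hellinger identity \eqref{hellinger} to evaluate $E_{\mathbb P}\left[Z^{\gamma}_{T-t}(\alpha^t)\mid\alpha\right]$, It\^o's formula applied to the product of $Z_t^{\gamma-1}(\alpha)$ with the deterministic $\alpha$-dependent exponential, and the continuous/purely-discontinuous orthogonality plus invertibility of $c_t$ to extract $\hat\varphi^{(k)}$. The only cosmetic difference is that you pin down $\lambda_0$ from the budget constraint \eqref{initial} directly, whereas the paper reads it off from matching the initial value of the wealth process ($(\lambda_0\mathbb Z_0^*)^{\gamma-1}g_0=x_0$); the two are equivalent.
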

\begin{proof}
In this case, 
$$-E_{\mathbb P}\left [Z_{T-t}(\alpha^t)f'(\lambda_0 x Z_{T-t}(\alpha^t)\big|\alpha   \right ]=\lambda_0^{\gamma-1}x^{\gamma-1}E_{\mathbb P}\left [Z^{\gamma}_{T-t}(\alpha^t)\big|\alpha\right].$$
The last conditional expectation, according to the discussion on the Hellinger process of the section 3, is  given by
$$E_{\mathbb P}\left [Z^{\gamma}_{T-t}(\alpha^t)\big|\alpha\right]=\exp\left(-\sum_{j=1}^N h^{(j),*}(\gamma)\int_t^TI_{\{\alpha_{s-}=j\}}ds\right).$$
Then, 
$$-E_{\mathbb Q}\left [f'(\lambda_0 \mathbb Z^*_T)\big|\hat {\mathcal F}_t\right ]=\lambda_0^{\gamma-1}(Z_t^*)^{\gamma-1}\exp\left(-\sum_{j=1}^N h^{(j),*}(\gamma)\int_t^TI_{\{\alpha_{s-}=j\}}ds\right).$$
Let us apply It\^o formula for the right hand side dropping $(\lambda_0\mathbb Z^*_0)^{\gamma-1}$. For that let $N=(N_t)_{t \ge 0}$ be a process where
$$N_t=Z_t^{\gamma-1}(\alpha)g_t$$
with $g_t=\exp\left(-\sum_{j=1}^N h^{(j),*}(\gamma)\int_t^TI_{\{\alpha_{s-}=j\}}ds\right)$.
Then, by integration by part formula
\begin{equation}\label{form4N}
N_t=g_0+\int_0^t g_s dZ_s^{\gamma-1}(\alpha)+\int_0^t Z^{\gamma-1}_s(\alpha)dg_s.
\end{equation}
since $Z_0(\alpha)=1$. Using the representation (\ref{form4z})	for $Z_s(\alpha)$ from the previous proposition, we then get that
$$Z_s^{\gamma-1}(\alpha)=\exp((\gamma-1)(M_s+B_s)).$$ 
Again  by It\^o formula via  the localisation with the stopping times
$$\tau_n=\inf\{t\geq 0 \,|\,Z_t(\alpha)\leq \frac{1}{n}\,\mbox{or}\, Z_t(\alpha)\geq n\}$$
where $n\geq 1$ and $\inf\{\emptyset \}=+\infty $, and further limit passage $n\rightarrow\infty$
we find that
\begin{equation*}
\begin{split}
Z_t^{\gamma-1}(\alpha)=&1+(\gamma-1)\int_0^tZ_{s-}^{\gamma-1}(\alpha)(dM_s+dB_s)+\frac{1}{2}(\gamma-1)^2\int_0^tZ_{s-}^{\gamma-1}(\alpha)d\langle M^c\rangle_s\\
&+\int_0^t\int_{\mathbb R^d}Z_{s-}^{\gamma-1}\left[Y_s^{\gamma-1}(x)-1-(\gamma-1)\ln Y_s(x)\right]\mu_X(ds,dx).
\end{split}
\end{equation*}
We deduce from the above formula and \eqref{form4N} that 
$$N_t=g_0+ \int_0^t Z^{\gamma-1}_s(\alpha)dg_s+$$
$$(\gamma-1)\int_0^t g_s Z_{s-}^{\gamma-1}(\alpha)dM_s+(\gamma-1)\int_0^t g_s Z_{s-}^{\gamma-1}(\alpha)dB_s+\frac{1}{2}(\gamma-1)^2\int_0^t g_sZ_{s-}^{\gamma-1}(\alpha)d\langle M^c\rangle_s$$
$$+\int_0^t\int_{\mathbb R^d}g_sZ_{s-}^{\gamma-1}\left[ Y_s^{\gamma-1}(x)-1-(\gamma-1) \ln Y_s(x)\right]\mu_X(ds,dx).$$
From the previous formula, using compensation  and taking in account that $N$ is $\mathbb Q$-martingale, we deduce  that the drift part of $N$ is equal to zero, and that 
\begin{equation*}
\begin{split}
N_t =& g_0+(\gamma-1)\int_0^t g_s Z_{s-}^{\gamma-1}(\alpha)dM_s\\
&\qquad +\int_0^t\int_{\mathbb R^d}g_sZ_{s-}^{\gamma-1}(\alpha)\left[ Y_s^{\gamma-1}(x)-1-(\gamma-1)\ln Y_s(x)\right](\mu_X-\nu_X^{\mathbb Q})(ds,dx)\\
=&g_0+(\gamma-1)\int_0^t g_s Z_{s-}^{\gamma-1}(\alpha){^{\top}}\beta_sd X^{c,\mathbb Q}_{s} + \\
&\qquad +\int_0^t\int_{\mathbb R^d}g_sZ_{s-}^{\gamma-1}\left[ Y_s^{\gamma-1}(x)-1\right](\mu_X-\nu_X^{\mathbb Q})(ds,dx)
\end{split}
\end{equation*}
According to Proposition \ref{GR}
$$(\lambda_0\mathbb Z_0^*)^{\gamma-1}N_t=x_0+\sum_{k=1}^N\int_0^t \hat \varphi_s^{(k)}S_{s-}^{(k)}dX_s^{(k)}.$$
Then $(\lambda_0\mathbb Z_0^*)^{\gamma -1}g_0=x_0$ and 
$$\lambda_0^{\gamma -1}= x_0\,\left( E_{\mathbb{P}}\left[\exp\left(\frac{1}{\gamma-1}\sum_{i=1}^N h^{(j),*}(\gamma)\int_0^TI_{\{\alpha_{s-}=j\}}ds\right)\right]\right)^{\gamma-1}$$
Moreover, for $0\leq t\leq T$
$$\sum_{k=1}^d \int_0^t\left[ \frac{x_0(\gamma-1)g_s}{g_0}Z_{s-}^{\gamma-1}(\alpha)\beta_s^{(k)}-\varphi_s^{(k)}S_{s-}^{(k)}\right]dX^{(k,c,\mathbb Q)}_s= V_t$$
where
$$V_t=\sum_{k=1}^N\int_0^t \hat \varphi_s^{(k)}S_{s-}^{(k)}dX_s^{(k),d,\mathbb Q}
- \int_0^t\int_{\mathbb R^d} \frac{x_0g_s}{g_0}Z_{s-}^{\gamma-1}(\alpha)\left[ Y_s^{\gamma-1}(x)-1\right](\mu_X-\nu_X^{\mathbb Q})(ds,dx)$$
The process $V=(V_t)_{0\leq t\leq T}$ is pure discontinuous $\mathbb Q$-martingale which is orthogonal to the continuous martingale of the left-hand side of the above equality. Then, the quadratic variation of the continuous martingale is equal to zero for $0 \le t \le T$:
$$\langle \,\sum_{k=1}^d\int_0^t\left [ \frac{x_0(\gamma-1)g_s}{g_0}Z_{s-}^{\gamma-1}(\alpha)\beta_s^{(k)}-\hat \varphi_s^{(k)}S_{s-}^{(k)}\right ]dX_s^{(k),c,\mathbb Q}\,\rangle_t=0$$
Simple calculations show that the previous equality is equivalent to
$$\int_0^t\sum_{k=1}^d\sum_{l=1}^d\left[\frac{x_0(\gamma-1)g_s}{g_0}Z_{s-}^{\gamma-1}(\alpha) \beta_s^{(k)}-\hat \varphi_s^{(k)}S_{s-}^{(k)}\right]\left[ \frac{x_0(\gamma-1)g_s}{g_0}Z_{s-}^{\gamma-1}(\alpha) \beta_s^{(l)}-\hat \varphi_s^{(l)}S_{s-}^{(l)}\right] c^{(k,l)}_{s}=0,$$
where the matrix $c_s$ is defined by (\ref{form4c}).
 Then, the  quadratic form associated with the strictly positive matrix $c_t$ 
$$\sum_{k=1}^d\sum_{l=1}^d\left[\frac{x_0(\gamma-1)g_s}{g_0}Z_{s-}^{\gamma-1}(\alpha) \beta_t^{(k)}-\hat \varphi_t^{(k)}S_{t-}^{(k)}\right]\left[\frac{x_0(\gamma-1)g_s}{g_0}Z_{s-}^{\gamma-1}(\alpha) \beta_t^{(l)}-\hat \varphi_t^{(l)}S_{t-}^{(l)}\right] c^{(k,l)}_{t}=0$$
and it  proves that 
$$\hat \varphi_t^{(k)}=\frac{x_0\,(\gamma-1)\,g_t\,Z_{t-}^{\gamma-1}(\alpha)\,\beta_{t}^{(k)}}{g_0\,S_{t-}^{(k)}}$$
and this gives the formula of the proposition.
\par Using again Proposition \ref{GR} and doing simple calculus, we get the formula for the optimal expected utility.
\end{proof}
\subsection{When the utility is exponential}
When $u$ is exponential, $f(x)=x\ln x-x+1$ and $f'(x)=\ln(x)$.
\begin{proposition}
Suppose that $E_{\mathbb P}\left[|\mathbb Z^*_T \ln \mathbb Z^*_T|\right] < \infty$ and that the matrices $c^{(j)}$ are  invertible for $1\le j\le N$. Then for the initial capital $x_0 >0$, there exists an asymptotically optimal strategy such that for $1 \le k \le d$ and $0 \le t \le T$
$$\hat \varphi^{(k)}_t=\frac{-\beta^{(k)}_t}{S_{t-}^{(k)}}$$
where $\beta$ defined by formula (\ref{form16beta}).
This strategy is progressively adapted and the corresponding maximal expected utility is  
$$U^{exp}_T(x_0)=  1-\exp(-x_0)\,\left( E_{\mathbb P}\left[\exp\left(- \sum_{j=1}^N \kappa^{(j),*}\int_0^T I_{\{\alpha_{s-}=j\}}ds)\right)\right]\right).$$
\end{proposition}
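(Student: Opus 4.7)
The plan is to follow the four-step template of Propositions 7 and 8: (i) explicitly evaluate the conditional expectation $N_t := -E_{\mathbb{Q}}[f'(\lambda_0 \mathbb{Z}^*_T)\mid \hat{\mathcal F}_t]$ using the structural formula derived at the start of Section 5; (ii) apply It\^o's formula together with Girsanov to expose its $(\mathbb{Q},\hat{\mathbb{F}})$-semimartingale decomposition; (iii) match the result against $x_0+\sum_k\int\hat\varphi^{(k)}dS^{(k)}$ furnished by Proposition 3 to read off $\hat\varphi$; (iv) identify $\lambda_0$ from the initial-capital equation and compute the maximal expected utility.

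For step (i), since $f'(x)=\ln x$, writing $\mathbb{Z}^*_T=\mathbb{Z}^*_t\,(\mathbb{Z}^*_T/\mathbb{Z}^*_t)$ and using that, conditionally on $\alpha$, the ratio $\mathbb{Z}^*_T/\mathbb{Z}^*_t$ is independent of $\hat{\mathcal F}_t$ and distributed as $Z_{T-t}(\alpha^t)$, one obtains
\[
N_t=-\ln\lambda_0-\ln\mathbb{Z}^*_t-E_{\mathbb{P}}\!\left[\tfrac{\mathbb{Z}^*_T}{\mathbb{Z}^*_t}\,\ln\!\tfrac{\mathbb{Z}^*_T}{\mathbb{Z}^*_t}\,\Big|\,\hat{\mathcal F}_t\right].
\]
The last conditional expectation factorises over the independent L\'evy pieces $Z^{(j),*}$ and, by the Kulback--Leibler identity of Proposition \ref{HK} combined with \eqref{form4c} and \eqref{comp}, collapses to $\sum_{j=1}^N \kappa^{(j),*}\int_t^T I_{\{\alpha_{s-}=j\}}\,ds$.

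For step (ii), since $\xi^{(\alpha),*}_T$ is $\hat{\mathcal F}_0$-measurable, $\ln\mathbb{Z}^*_t-\ln\mathbb{Z}^*_0=\ln\mathcal{E}(m)_t$. I would apply It\^o to $\ln\mathcal{E}(m)_t$ exactly as in the proof of Proposition 7 and switch to $\mathbb{Q}$-dynamics via $X^c=X^{c,\mathbb{Q}}+\int c\beta\,ds$ and $\nu^{\mathbb{Q}}_X=Y\nu_X$. A short algebraic reshuffling collapses the $\mathbb{Q}$-drift of $\ln\mathcal{E}(m)_t$ to $\sum_j \kappa^{(j),*}\int_0^t I_{\{\alpha_{s-}=j\}}\,ds$, which exactly cancels the $t$-dependent term in $N_t$. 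The surviving $\mathbb{Q}$-martingale part is
\[
N_t-N_0=-\int_0^t\,^{\top}\!\beta_s\,dX^{c,\mathbb{Q}}_s-\int_0^t\!\!\int_{\mathbb R^d}\ln Y_s(x)\,(\mu_X-\nu^{\mathbb{Q}}_X)(ds,dx).
\]
For step (iv), at $t=0$ one has $\mathbb{Z}^*_0=\xi^{(\alpha),*}_T$, and inserting the explicit form \eqref{exp} shows that the $\alpha$-dependence in $\ln\xi^{(\alpha),*}_T$ cancels the $\sum_j\kappa^{(j),*}T_j^{(\alpha)}$ contribution exactly; imposing $N_0=x_0$ then yields $\lambda_0=\exp(-x_0)\,E_{\mathbb{P}}[\exp(-\sum_j T_j^{(\alpha)}\kappa^{(j),*})]$. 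Since $u(-\ln(\lambda_0\mathbb{Z}^*_T))=1-\lambda_0\mathbb{Z}^*_T$ and $E_{\mathbb{P}}[\mathbb{Z}^*_T]=1$, the maximal utility is $1-\lambda_0$, which is the stated formula.

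For step (iii), which I expect to be the main obstacle, I would equate $N_t-N_0$ with $\sum_k\int\hat\varphi^{(k)}_s S^{(k)}_{s-}\,dX^{(k)}_s$ and decompose both sides into their mutually orthogonal continuous-martingale and purely discontinuous parts. Strict positive-definiteness of $c_t=\sum_j c^{(j)}I_{\{\alpha_{t-}=j\}}$, guaranteed by the invertibility hypothesis, then forces the continuous-part identity $\hat\varphi^{(k)}_t S^{(k)}_{t-}=-\beta^{(k)}_t$, which is the claimed formula. The jump parts, however, would require $\langle\beta,x\rangle=\ln Y(x)$ to hold $\nu_X$-a.s., an identity that generally fails as soon as genuine jumps are present; this is precisely the reason the statement asserts only \emph{asymptotic} optimality, and the approximating sequence realising the supremum is the one provided by the general framework of Proposition \ref{GR} (cf.\ \cite{GR}).
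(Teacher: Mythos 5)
Your steps (i), (ii) and (iv) reproduce the paper's argument: the Kulback--Leibler identity of Proposition \ref{HK} gives the conditional expectation, the It\^o/Girsanov computation shows that the $\mathbb{Q}$-drift of $\ln\mathcal{E}(m)_t$ is exactly $\sum_j\kappa^{(j),*}\int_0^tI_{\{\alpha_{s-}=j\}}ds$ so that only the $\mathbb{Q}$-martingale $-\int{}^{\top}\beta\,dX^{c,\mathbb{Q}}-\int\int\ln Y\,(\mu_X-\nu_X^{\mathbb{Q}})$ survives, and the cancellation of the $\alpha$-dependent terms in $\ln\xi^{(\alpha),*}_T$ against $\sum_j\kappa^{(j),*}T_j^{(\alpha)}$ yields $\lambda_0$ and $U^{exp}_T(x_0)=1-\lambda_0$, exactly as in the paper. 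The extraction of $\hat\varphi$ from the vanishing of the quadratic variation of the continuous-martingale difference, using the strict positivity of $c_t$ from \eqref{form4c}, is also the paper's step (iii).

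The one point where you go wrong is the closing remark. The matching of the purely discontinuous parts requires $\ln Y_s(x)=\langle\beta_s,x\rangle$ $\nu_X$-a.e.\ once $\hat\varphi^{(k)}_tS^{(k)}_{t-}=-\beta^{(k)}_t$ is imposed, but this identity does \emph{not} ``generally fail'': for the $f$-divergence minimal measure associated with exponential utility (the minimal entropy martingale measure) the second Girsanov parameter has precisely the Esscher form $Y^{(j),*}(x)=\exp\langle\beta^{(j),*},x\rangle$ for the Dol\'eans-Dade exponential model (cf.\ \cite{FM}, \cite{ES}, \cite{HS}), so the jump parts are consistent and the representation of Proposition \ref{GR} is realized by the stated $\hat\varphi$ even in the presence of jumps. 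If the identity really failed, your own argument would collapse, since then no $\hat\varphi$ could satisfy the representation whose existence Proposition \ref{GR} guarantees. The reason the proposition claims only \emph{asymptotic} optimality is different and is already built into Proposition \ref{GR}: the terminal wealth $-\ln(\lambda_0\mathbb{Z}^*_T)$ is not bounded from below, so the strategy attaining it is not admissible in the class $\mathcal{A}$ and must be approximated by a sequence of admissible strategies; this is a feature of the exponential utility in the Goll--R\"uschendorf framework, not of the jump structure.
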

\begin{proof}
In this case,
\begin{equation*}
\begin{split}
E_{\mathbb P}\left[Z_{T-t}(\alpha^t)f'(\lambda_0 x Z_{T-t}(\alpha^t))\big|\alpha\right]=&\ln(\lambda_0x)E_{\mathbb P}\left[Z_{T-t}(\alpha^t)\big|\alpha\right]+E_{\mathbb P}\left[Z_{T-t}(\alpha^t) \ln Z_{T-t}(\alpha^t)\big|\alpha\right]\\
=& \ln (\lambda_0 x)+\sum_{j=1}^N \kappa^{(j),*}\int_t^T I_{\{\alpha_{s-}=j\}}ds
\end{split}
\end{equation*}
since $E_P(Z_{T-t}(\alpha^t)|\alpha)=1$ due to the fact that it is a martingale starting from 1. Hence,
$$-E_{\mathbb Q}(f'(\lambda_0 \mathbb Z_T^*)| \hat {\mathcal F}_t)=-\ln(\lambda_0)-\ln(\mathbb Z^*_t)-\sum_{j=1}^N \kappa^{(j),*}\int_t^T I_{\{\alpha_{s-}=j\}}ds.$$
As it was mentioned $ Z_t(\alpha)=\exp(M_t+B_t)$ w.r.t. the measure $\mathbb Q$. Hence,
\begin{equation}\label{form4lnZ}
\ln  Z_t(\alpha)=M_t+B_t.
\end{equation}
and the right-hand side of (\ref{form4lnZ}) should be a martingale. Since the processes $B=(B_t)_{t \ge 0}$ and $(\sum_{j=1}^N \int_0^t I_{\{\alpha_{s-}=j\}}dK_s^{(j)})_{t \ge 0}$ are the predictable processes, we obtain via Proposition \ref{GR} that
$$-\int_0^t \,^{T}\beta_sdX_s^{c,\mathbb Q}-\int_0^t\int_{\mathbb R^d}\ln(Y_s(x))(\mu_X-\nu_X^{\mathbb Q})(ds,dx)=\sum_{k=1}^d \int_0^t \hat \varphi_s^{(k)}S_{s-}^{(k)}dX_s^{(k)}$$
and that 
$$\ln(\lambda_0)+E_{\mathbb{P}}(\ln(\mathbb Z_0^*))+E_{\mathbb{P}}\left(\sum_{j=1}^N \kappa^{(j),*}\int_0^T I_{\{\alpha_{s-}=j\}}ds\right)=-x_0$$
 Performing the separation of continuous and discontinuous part in the right-hand side of the previous expression, and using the orthogonality of continuous and pure discontinuous martingales, we deduce  that the quadratic variation of the continuous martingale part is zero, and, hence,
$$\sum_{k=0}^d\sum_{l=0}^d\left ( \beta_t^{(k)}+\hat \varphi^{(k)}_t S_{t-}^{(k)}\right )\left ( \beta_t^{(l)}+\hat \varphi^{(l)}_t S_{t-}^{(l)}\right )c_t^{(k,l)} ds=0$$
where $c_t^{(k,l)}$ are the elements of the strictly positive matrix $c_t$ given in \eqref{form4c}.
It proves the formula for the optimal strategy.
\par Using again Proposition \ref{GR} and doing simple calculus, we get the formula for the optimal expected utility.
\end{proof}
\section{Example}
In this section, we shall consider one important example that each L\'evy process is a Brownian motion with drift. More precisely,
	$$X^{(j)}_t=b^{(j)}t+\sigma^{(j)}W^{(j)}_t,$$
	where $(W^{(j)})_{1\leq j\leq N}$ states for independent $d$-dimensional Brownian motions,  $(b^{(j)})_{1\leq j\leq N}$ are $d$-dimensional vectors and $(\sigma^{(j)})_{ 1\leq j\leq N}$ are $d\times d$ real-valued matrix. We assume that $\sigma^{(j)}$ is invertible for each $j$. In this case, the set $\mathcal M^{(j)}$ of the martingale measures consists of only one element with Girsanov parameter
	$$\beta^{(j),*}= -(^{\top}\!\sigma^{(j)}\sigma^{(j)})^{-1}b^{(j)}.$$
	Then we see that the function related to Hellinger integrals and Kulback-Leibler informations are 
	$$ h^{(j),*}(\gamma)=\frac{\gamma(1-\gamma)}{2} \langle (^{\top}\!\sigma^{(j)}\sigma^{(j)})^{-1}b^{(j)}, b^{(j)}\rangle$$
	and
	$$\kappa^{(j),*}=\frac{1}{2}\langle (^{\top}\!\sigma^{(j)}\sigma^{(j)})^{-1}b^{(j)}, b^{(j)}\rangle.$$
	As before, $\beta_s$ is defined by the formula  \eqref{form16beta}.\\
	For the power utility, from  Proposition 8, we know that the optimal investment strategy $\hat \varphi$ is given by 
	$$\hat \varphi_t^{(k)}=\frac{x_0(\gamma-1)\,Z_{t-}^{\gamma-1}(\alpha)\,\beta_{t}^{(k)}\,\exp\left(-\sum_{j=1}^N h^{(j),*}(\gamma)\int_0^t I_{\{\alpha_{s-}=j\}}ds\right)}{S_{t-}^{(k)}}$$
	where $1\leq k\leq N$ and it satisfies 
	\begin{equation}\label{optimal_investment}
(\lambda _0\mathbb{Z}^*_T)^{\gamma-1}=x_0+\sum_{k=1}^d\int_0^T  \hat{\phi}^{(k)}_s dS^{(k)}_s.
\end{equation}
	Denote by $V^{\hat \varphi}=(V_t^{\hat \varphi})_{0 \le t \le T}$ the corresponding value process of the optimal portfolio. Then
	$$V_T^{\hat \varphi}=(\lambda _0\mathbb{Z}^*_T)^{\gamma-1}.$$
	Since the stochastic integral in (\ref{optimal_investment}) is a $\mathbb Q$-martingale, we have
    $$V_t^{\hat \varphi}=E_{\mathbb Q}[(\lambda_0\mathbb Z_T^*)^{\gamma-1}|\hat {\mathcal F}_t]=\lambda_0^{\gamma-1}\mathbb Z_t^{*,\gamma-1}E_{\mathbb P}[Z^{\gamma}_{T-t}(\alpha^t)|\alpha].$$
    In the proof of Proposition 8, we have already  calculated that 
    $$V_t^{\hat \varphi}=x_0Z_{t}^{\gamma-1}(\alpha)\exp\left(-\sum_{j=1}^N h^{(j),*}(\gamma)\int_0^t I_{\{\alpha_{s-}=j\}}ds\right).$$
    Thus, the optimal strategy can also be written as 
	$$\hat \varphi^{(k)}_t =\frac{(\gamma-1)\beta_t^{(k)}V_{t-}^{\hat \varphi}}{S_{t-}^{(k)}}.$$
	With similar calculation, we can also rewrite the optimal value process for the logarithm utility in a similar form as
	$$\hat \varphi^{(k)}_t =\frac{\beta_t^{(k)}V_{t-}^{\hat \varphi}}{S_{t-}^{(k)}}.$$
	In fact, from the discussion above, we see that  the same is true for the general case where the L\'evy process admits jumps but with a non-degenerated diffusion. This suggest a very simple structure of the investment strategy. In both cases, the investor should keep a constant proportion of money in each risky assets when the market state is fixed. The proportion of money depends on the excess return and volatility. In some case with negative excess return, the investor should choose to short-sell such assets. Once the market state switched to another state, the investor should rebalance his portfolio and keep the proportion constant until next switching. In each state, we see that the total proportion that invested in the risky assets is a multiple of  $\sum_{k=1}^d \beta_t^{(k)}$. By the definition of $\beta$, we see that, comparing to bull market (high return, low volatility), the investor will put more money in the riskless account in the bear market (high volatility, low return). For the exponential utility, the strategy is similar except that the investor should keep a constant mount of money in each asset.
\section{Acknowledgements}
This research  was partially supported by Defimath project of the Research Federation of "Math\' ematiques des Pays de la Loire" and by PANORisk project "Pays de la Loire" region.

\end{document}